\title{Specification properties for non-autonomous discrete systems}
\author{Mohammad Salman$^a$, \  Ruchi Das$^{a,\dag}$}
\title{Specification properties for non-autonomous discrete systems}
\theoremstyle{definition}
\newtheorem{defn}{Definition}[section]
\providecommand{\keywords}[1]{\textbf{Key words and phrases :} #1}
\providecommand{\msc}[1]{\textbf{Mathematics Subject Classification(2010)} #1}
\theoremstyle{plain}
\newtheorem{thm}{Theorem}[section]
\newtheorem{cor}[defn]{Corollary}
\theoremstyle{definition}
\newtheorem{exm}{Example}[section]
\newtheorem{rmk}{Remark}[section]
\begin{document}
\date{}
\maketitle

\begin{abstract}
 In this paper  notions of strong specification property and quasi-weak specification property for non-autonomous discrete systems are introduced and studied. It is shown that these properties are dynamical properties and are preserved under  finite product. It is proved that a $k$-periodic non-autonomous system on intervals having weak specification is Devaney chaotic whereas if the system has strong specification then the result is true in general. Specification properties of induced systems on hyperspaces and probability measures spaces are also studied. Examples/counter examples are provided wherever necessary to support results obtained. 
\end{abstract}

\keywords{Non-autonomous discrete system; Induced systems; Specification; Topological mixing}

\msc{37B55; 54H20; 37B20; 54B20}
\bigskip\renewcommand{\thefootnote}{\fnsymbol{footnote}}
\footnotetext{\hspace*{-5mm}
\renewcommand{\arraystretch}{1}
\begin{tabular}{@{}r@{}p{11cm}@{}}
$^\dag$& the corresponding author. \emph{Email addresses}: rdasmsu@gmail.com (R. Das), salman25july@gmail.com (M. Salman)\\
$^a$&Department of Mathematics, University of Delhi, Delhi-110007, India
\end{tabular}}

\vspace{-2mm}
\section{Introduction}
Dynamical system  is a very well developed branch of mathematics. In its contemporary formulation,
the theory grows directly from advances in understanding complex and nonlinear systems in physics and mathematics.  Over the last 40 years with the discovery of chaos lots of research has been done in autonomous dynamical systems. The first paper that described chaos in a mathematically rigorous way is that of Li and Yorke \cite{11}. Since then the research on chaos has had a great influence on modern science. Specification property is an interesting and rather stronger notion of chaos. This property is closely related to the study of hyperbolic systems. Roughly speaking, by specification property we mean that any $k$ finite pieces of orbits can be approximated by some periodic orbit, provided the period is large enough. In 1971, Bowen  introduced the specification property on Axiom A diffeomorphisms \cite{3}. This property seems technical but is satisfied by many important maps like shift systems, Anosov diffeomorphisms, etc. For more details, one can refer to \cite{21}. The relevance of the specification property is that it plays a key role in the study of the uniqueness of equilibrium states, large deviation theory, multi-fractal analysis, etc. In \cite{22}, it is seen that the irregular set for maps with the specification property has full topological pressure.  Various types of specification properties including weak specification property, almost specification property \cite{5, 10, 101}, approximate product property \cite{14}, for autonomous discrete dynamical systems are intensively studied from an ergodic view point as well as algebraic view point. In \cite{1}, authors have studied specification on operators and proved that this property is equivalent to the notion of Devaney chaos for backward shift operators on Banach sequence spaces. Recently, specification property for uniform spaces has been defined and studied in \cite{18}.

Most of the real-world problems like weather and climate prediction, heartbeat patterns, spread of infectious diseases, etc., are time variant, that is, they involve time-dependent parameters, modulation and various other effects. Thus, non-autonomous systems are more flexible tools for the description and study of real world process. Dynamics of such systems are more complicated than autonomous dynamical systems and the variety of dynamical behaviour that can be represented is much richer. The concept of non-autonomous discrete dynamical systems was introduced by Kolyada and Snoha \cite{9}, in 1996. Since then non-autonomous systems are widely studied and have remarkable applications   \cite{4, 25}. Chaos for the non-autonomous dynamical systems was introduced by Tian and Chen \cite{23}. In 2014, authors have introduced the concept of weak specification property (WSP) for non-autonomous systems \cite{12}. They have related specification property to topological mixing, shadowing and distality in non-autonomous systems. In 2017, authors  have proved that any non-autonomous system having the weak specification property has positive topological entropy and that each uniformly expanding non-autonomous system satisfies the shadowing and the specification properties \cite{17}. Recently, in 2018, authors have studied stronger forms of sensitivity for non-autonomous discrete dynamical systems \cite{15, 24}.

The paper is organized as follows. In section 2, we give prerequisites for development of rest of the paper. In Section 3, we introduce the concept of strong (SSP) and quasi-weak specification (QSP) properties for non-autonomous systems. It is shown that if the $k$-peridic non-autonomous system has specification properties, then the corresponding autonomous system generated by the composition of $k$ members also has specification. Moreover, specification properties are dynamical properties and preserved under finite product. In Section 4, we show that for a non-autonomous system QSP is equivalent to topological mixing on a compact metric space. It is shown that a $k$-periodic non-autonomous system on intervals having WSP or QSP is Devaney chaotic and if the system has SSP, then this result is true in general. In Section 5, we  proved that if the non-autonomous system has SSP, then the corresponding system induced on the hyperspaces also has SSP and this result holds both ways for WSP as well as QSP. It is proved that if the non-autonomous system has SSP, then the corresponding system induced on the probability measures spaces also has SSP and the result is true both ways for QSP.

\section{Preliminaries}
 In the present paper, we consider the following non-autonomous discrete dynamical system:
\begin{equation}\label{eq} x_{n+1} = f_n(x_n) ,  \ \ \ n \geq 1, \end{equation}
where $(X,d)$ is a \emph{compact metric space} and $f_n: X \rightarrow X$ is a continuous map, for each $n\geq 1$. When $f_n = f$, for each $n\geq 1$, then system (\ref{eq}) becomes autonomous system. Denote $f_{1,\infty} := \{f_n\}_{n=1}^{\infty}$, and for all positive integers $i$ and $n$, $f_n^i := f_{n+i-1}\circ\cdots\circ f_n$, $f_n^0 := id$. For the system (\ref{eq}), the $orbit$ of any point $x\in X$  is the set, $\{f_1^n(x) : n\geq 0\} = \mathcal{O}_{f_{1,\infty}}(x)$. We say that $(X, f_{1,\infty})$ is a $k$-$periodic$ discrete system, if there exists $k\in\mathbb{N}$ such that $f_{n+k}(x) = f_n(x)$, for any $x\in X$ and $n\in\mathbb{N}$. Throughout this paper,  $f_{1,\infty}$ denotes the surjective family, that is, each $f_i$ is surjective. For a non-autonomous system $(X,f_{1,\infty})$, we put $X^{2}$ = $X \times X$ and $(f_{1,\infty})^{2}$ = ($g_1$, $g_2$, \ldots  , $g_n$, \ldots), where $g_{n}$ = $f_{n}$ $\times$ $f_{n}$, for each positive integer $n$. Therefore, $(X^{2},(f_{1,\infty})^{2})$ is a non-autonomous dynamical system. We have, 
$g_1^n = g_n \circ g_{n-1}\circ\cdots\circ g_2\circ g_1
               = (f_n \times f_n)\circ (f_{n-1} \times f_{n-1})\circ\cdots\circ (f_2 \times f_2) \circ (f_1 \times f_1) = f_{1}^{n}\times f_{1}^{n}$. Similarly we can define $(X^{m},(f_{1,\infty})^{m})$ in general for any positive integer $m$. Let $(X, d_1)$ and $(Y, d_2)$ be two metric spaces, then the product metric $\tilde{d}$ on $X\times Y$ is defined by $\tilde{d}((x_1, y_1), (x_2, y_2)) = \max\{d_1(x_1, x_2), d_2(y_1, y_2)\}$, for all $(x_1, y_1),(x_2, y_2)\in X\times Y.$
 Let $B_d(x, \epsilon)$ be the open ball of radius $\epsilon>0$ and  center $x$ and $\mathbb{N}$ be the set of natural numbers.

The symbol $\cal{K}$($X)$ denotes the hyperspace of all non-empty compact subsets of $X$ endowed with the \textit{Vietoris Topology}. A basis  for Vietoris topology is given by the sets, $\langle U_1, U_2, \ldots , U_k\rangle$ = $\{K \in$ $\cal{K}$($X)$: $K \subseteq \bigcup_{i=1}^{k} U_{i}$ and $K\cap U_{i}$  $\ne \emptyset$, for each $i$ $\in \{1, 2, \ldots ,k\}$\}, where $U_1, U_2, \ldots ,U_k$ are non-empty open subsets of $X$. Let $x \in X$, $A \in$ $\cal{K}$($X)$ and $N(A, \epsilon)$ = $\bigcup_{a \in A} B_d(\epsilon, a)$. The Hausdorff metric in $\cal{K}$($X)$  induced by $d$, denoted by $\cal{H}$ is defined  by $\cal{H}$($A, B) = \inf \{ \epsilon > 0: A \subseteq N(B, \epsilon) \  \text{and} \  B \subseteq N(A, \epsilon)\}$,  where $A$, $B \in$ $\cal{K}$($X)$. The topology induced by the Hausdorff metric on $\mathcal{K}(X)$ coincides with the Vietoris topology if and only if the space $X$ is compact \cite{8}. Under this topology, the set of all finite subsets of $X$, $\mathcal{F}(X)$, is dense in $\mathcal{K}(X)$. Let $(X, f_{1,\infty})$ be a non-autonomous  dynamical system and $\overline{f}_n$ the  continuous function on $\cal{K}$($X)$ induced by $f_n$, for each $n\in\mathbb{N}$. Then the sequence $\overline{f}_{1,\infty}$ = ($\overline{f}_1,\ldots, \overline{f}_n, \ldots )$ induces a non-autonomous discrete dynamical system ($\cal{K}$($X), \overline{f}_{1,\infty})$, where $\overline{f}_1^n = \overline{f}_n \circ \cdots \circ \overline{f}_2\circ \overline{f}_1$.  Clearly, $\overline{f}_1^n = \overline{f^n_1}$.

Let $\mathcal{B}(X)$ be the $\sigma$-algebra of Borel subsets of $X$ and $\mathcal{M}(X)$ be the set of all \emph{Borel probability measures} on $(X, \mathcal{B}(X))$ and $\mathcal{M}(X)$ be equipped with the \emph{Prohorov metric $\mathcal{D}$} defined by $\mathcal{D}(\mu, \nu)$ = $\inf\{\epsilon: \mu(A)\leq \nu(N(A, \epsilon))+\epsilon \ \text{and} \ \nu(A)\leq \mu(N(A, \epsilon))+\epsilon$, for each $A\in\mathcal{B}(X)\}$. It is known that topology induced by $\mathcal{D}$ is weak*-topology \cite{7}. For $x\in X$, $\delta_x\in \mathcal{M}(X)$ denotes \emph{Dirac point measure}, given by $\delta_x(A) = 1$, if $x\in A$ and $0$ otherwise. Let $\mathcal{M}_n(X) =\{(\sum_{i=1}^n\delta_{x_i})/n : x_i\in X \ \text{(not necessarily distinct)}\}$ and $\mathcal{M}_{\infty}(X) = \bigcup_{n\in\mathbb{N}}\mathcal{M}_n(X)$. It is known that $\mathcal{M}_{\infty}(X)$ is dense in $\mathcal{M}(X)$ and each $\mathcal{M}_n(X)$ is closed in $\mathcal{M}(X)$ \cite{2}. For a non-autonomous system $(X, f_{1,\infty})$, we consider a non-autonomous induced system $(\mathcal{M}(X), \widetilde{f}_{1,\infty})$, where each $\widetilde{f}_{i}: \mathcal{M}(X)\to \mathcal{M}(X)$ is induced continuous function and $\widetilde{f}_{1}^n(\mu)(A) = \mu(f_1^{-n}(A))$, $\mu\in \mathcal{M}(X)$, $A\in\mathcal{B}(X)$ and $f_1^{-n} = (f_1^n)^{-1}$.

\begin{defn}\cite{23} A non-autonomous system $(X, f_{1,\infty})$ is said to be \emph{topologically transitive}, if for each pair of non-empty open subsets $U$, $V$ of $X$, there exists $n\in\mathbb{N}$ such that $f_1^n(U)\cap V\ne\emptyset$. For any two non-empty  open subsets $U$ and $V$ of $X$ denote, 
$N_{f_{1,\infty}}(U,V)= \{n\in \mathbb{N} : f_1^n(U)\cap V\ne \emptyset \}$. Therefore, $(X, f_{1,\infty})$ is transitive if $N_{f_{1,\infty}}(U,V)\ne\emptyset$, for any pair of non-empty open subsets $U,$ $V$ of $X$.
\end{defn}
\begin{defn}\cite{13}  A point $x\in X$ is said to be \emph{periodic}, for the non-autonomous system $(X, f_{1,\infty})$, if there exists $n\in\mathbb{N}$ such that $f_1^{nk}(x) = x$, for every $k\in\mathbb{N}$.
\end{defn}
\begin{defn}\cite{23} The system $(X, f_{1,\infty})$ is said to exhibit \emph{sensitive dependence on initial conditions} if there exists $\delta>0$ such that, for every $x\in X$ and any neighborhood $U$ of $x$, there exist $y\in U$ and $n\in\mathbb{N}$ with $d(f_1^n(x), f_1^n(y))>\delta$; $\delta>0$ is called a constant of sensitivity.\end{defn}
\begin{defn} A non-autonomous system $(X, f_{1,\infty})$ is said to be \emph{chaotic in the sense of Devaney} on $X$ if   \begin{enumerate}
\item
It is topologically transitive on $X$;
\item
 It has a dense set of periodic points;
\item
It has sensitive dependence on initial conditions on $X.$
\end{enumerate}
\end{defn}
A non-autonomous system $(X, f_{1,\infty})$ is \emph{Wiggins chaotic}, if it satisfies conditions $(1)$ and $(3)$ only in the above definition.
\begin{defn} A non-autonomous system $(X, f_{1,\infty})$ is said to be \emph{topologically mixing}, if there exists $n\in\mathbb{N}$ such that $N_{f_{1,\infty}}(U,V) \supseteq [n, \infty)$, for any pair of non-empty open subsets $U,$ $V$ of $X$.
\end{defn}
\begin{defn}\cite{12} A non-autonomous system $(X, f_{1,\infty})$ is said to have \emph{weak specification property (WSP)}, if for every $\epsilon>0$, there exists $N$ such that for every choice of points $x_1$, $x_2$, \ldots, $x_s\in X$ and any sequence $a_1\leq b_1< a_2 \leq b_2< \cdots <a_s \leq b_s$ of non-negative integers with $a_j - b_{j-1}> N$, ($2\leq j\leq s$), there is a point  $z\in X$  satisfying
\[d(f_1^j(z), f_1^j(x_i))<\epsilon, \ \text{for all} \ a_i\leq j\leq b_i, \ 1\leq i\leq s.\]
\end{defn}
\begin{defn}
Let $(X, f_{1,\infty})$ and $(Y,g_{1,\infty})$ be two non-autonomous discrete dynamical systems. Let $h: X \to Y$ be  such that $ g_n(h(x))=h(f_n(x)), \ \text{for each}$  $n \in \mathbb{N} \  \text{and each}  \  x \in X.$
If $h$ is continuous,  surjective map (homeomorphism), then $f_{1,\infty}$ and $g_{1,\infty}$ are said to be \emph{topologically semi-conjugate (topologically conjugate)}.
\end{defn}
\section{Strong and Quasi-weak Specification Properties for Non-Autonomous Systems}
In this section, we first introduce  concepts of strong and quasi-weak specification properties for non-autonomous systems. It is shown that if a $k$-periodic non-autonomous system has specification properties, then the corresponding autonomous system generated by the composition of $k$ members, also has specification properties. It is proved that specification properties are dynamical properties and are preserved under finite product.
\begin{defn}A non-autonomous system $(X, f_{1,\infty})$ is said to have \emph{strong specification property (SSP)}, if for every $\epsilon>0$, there exists $M(\epsilon)$ such that for every choice of points $x_1$, $x_2$, \ldots, $x_k\in X$ and any sequence $a_1\leq b_1< a_2 \leq b_2< \cdots <a_k \leq b_k$ of non-negative integers with $a_j - b_{j-1}> M(\epsilon)$, ($2\leq j\leq k$) and any $p> M(\epsilon) + b_k - a_1$, there exists a periodic point $z\in X$ with period $p$ satisfying
\[d(f_1^j(z), f_1^j(x_i))<\epsilon, \ \text{for all} \ a_i\leq j\leq b_i, \ 1\leq i\leq k.\] 
 We shall  call the above defined  specification property for $k =2$ as the  \emph{periodic specification property (PSP)}. Now, we define a weaker form of specification property.\end{defn}
\begin{defn}
A non-autonomous system $(X, f_{1,\infty})$ is said to have the \emph{quasi-weak specification property (QSP)}, if for any $\epsilon>0$, there exists a positive integer $M(\epsilon)$ such that for any $x_1$, $x_2\in X$ and any $n\geq M(\epsilon)$, there is a point $z\in X$ such that $d(z, x_1)<\epsilon$ and $d(f_1^n(z), f_1^n(x_2))<\epsilon$.

Clearly, we have \ SSP $\implies$ WSP $\implies$ QSP.
\end{defn}
\begin{rmk}In \cite{12}, authors have proved that the non-autonomous system having WSP is topologically mixing but not conversely. Since SSP implies WSP, therefore if the non-autonomous system $(X, f_{1,\infty})$ has SSP, then it is topologically mixing but converse is not true.
\end{rmk}
\begin{thm}\label{K} Let $(X, f_{1,\infty})$ be a $k$-periodic non-autonomous system and $g = f_k\circ f_{k-1}\circ\cdots \circ f_1$. If $(X, f_{1,\infty})$ has SSP, then the corresponding autonomous system $(X, g)$ also has SSP.
\end{thm}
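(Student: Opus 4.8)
The plan is to exploit the identity $g^m = f_1^{mk}$, valid for every $m \in \mathbb{N}$ precisely because the system is $k$-periodic: since $f_1^{mk} = (f_{mk}\circ\cdots\circ f_{(m-1)k+1})\circ\cdots\circ(f_k\circ\cdots\circ f_1)$, the $k$-periodicity $f_{n+k}=f_n$ collapses each block of $k$ consecutive maps to $g$, so $f_1^{mk}=g^m$. Consequently the $g$-orbit of a point is exactly the subsequence of its $f_{1,\infty}$-orbit read at times that are multiples of $k$, because $g^j(x) = f_1^{jk}(x)$. This identity is the bridge that lets me feed an autonomous specification request for $g$ into the non-autonomous specification of $f_{1,\infty}$.

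First I would fix $\epsilon > 0$, let $M(\epsilon)$ be the constant granted by the SSP of $(X, f_{1,\infty})$, and set $M'(\epsilon) := M(\epsilon)$ as the candidate constant for $(X, g)$. Given autonomous specification data for $g$, namely points $y_1,\dots,y_s \in X$, integers $a_1 \le b_1 < \cdots < a_s \le b_s$ with $a_j - b_{j-1} > M'(\epsilon)$, and a target period $P > M'(\epsilon) + b_s - a_1$, I would translate into the $f_{1,\infty}$ time-scale: keep the same points $x_i := y_i$, replace each interval $[a_i,b_i]$ by $[a_i k, b_i k]$, and choose the non-autonomous period $p := Pk$. Scaling the endpoints by $k$ is the crucial device, since an approximation of $f_1^{\ell}(z)$ to $f_1^{\ell}(y_i)$ on the whole integer interval $a_i k \le \ell \le b_i k$ automatically yields, at the sub-times $\ell = jk$, the desired estimate $d(g^j(z), g^j(y_i)) = d(f_1^{jk}(z), f_1^{jk}(y_i)) < \epsilon$ for $a_i \le j \le b_i$.

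Next I would verify that the translated data meets the hypotheses of the non-autonomous SSP, using $k \ge 1$: the gap condition $a_j k - b_{j-1} k = k(a_j - b_{j-1}) > M(\epsilon)$ follows from $a_j - b_{j-1} > M'(\epsilon) = M(\epsilon)$, and the period bound $p = Pk > M(\epsilon) + (b_s - a_1)k = M(\epsilon) + b_s k - a_1 k$ follows from $P > M'(\epsilon) + b_s - a_1$. Applying the SSP of $(X, f_{1,\infty})$ produces a point $z \in X$, periodic for $f_{1,\infty}$ with period $p = Pk$, satisfying the required approximations. Because $p$ is a multiple of $k$, the relation $f_1^{p}(z) = z$ reads $f_1^{Pk}(z) = g^{P}(z) = z$, so $z$ is a genuine period-$P$ point of $(X, g)$ meeting all the estimates, which establishes SSP for $(X, g)$ with $M'(\epsilon) = M(\epsilon)$.

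The main obstacle is the period bookkeeping rather than the approximation step. I must ensure the non-autonomous periodic point delivered by the SSP is actually $g$-periodic, and this is exactly why I force $p$ to be the multiple $Pk$ of $k$ (the freedom to choose any $p$ above the threshold makes this legitimate) and why I scale the endpoints by $k$ so that the threshold $p > M(\epsilon) + (b_s - a_1)k$ is met while $P$ remains the honest $g$-period. The single delicate point is reconciling the non-autonomous notion of ``period $p$'' with $g^P(z)=z$, and the identity $g^m = f_1^{mk}$ combined with the choice $p = Pk$ handles this cleanly.
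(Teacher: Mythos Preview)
Your proof is correct and follows essentially the same approach as the paper: rescale the specification data by the factor $k$, invoke the identity $g^{j}=f_{1}^{jk}$ coming from $k$-periodicity, and read off both the approximation and the $g$-periodicity from the non-autonomous SSP. The only cosmetic difference is that the paper sets the autonomous constant to $M_{1}=[M(\epsilon)/k]+1$ (so that $kM_{1}>M(\epsilon)$), whereas you take the simpler choice $M'(\epsilon)=M(\epsilon)$ and use $k\ge 1$ to push the inequalities through; both choices are valid.
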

\begin{proof}Let $\epsilon>0$ be arbitrary and $M(\epsilon)$ be the positive integer corresponding to $\epsilon$ as in the definition of SSP. Let $M_1 = [M(\epsilon)/k] + 1$, where $[ \ ]$ denotes the greatest integer function. Consider a sequence $a_1\leq b_1< a_2 \leq b_2< \cdots <a_l \leq b_l$ of non-negative integers with $a_j - b_{j-1}> M_1$ ($2\leq j\leq l$) and any $p> M_1 + b_l - a_1$, then  $ka_1\leq kb_1< ka_2 \leq kb_2< \cdots <ka_l \leq kb_l$ and $ka_j - kb_{j-1}>kM_1 > M(\epsilon)$ and $pk> M(\epsilon) + kb_l - ka_1$. Since $(X, f_{1,\infty})$ has SSP, therefore there exists a periodic point $z\in X$ with period $kp$ such that $d(f_1^i(z), f_1^i(x_m))<\epsilon$, for all $ka_m\leq i \leq kb_m$, $1\leq m\leq l$. Taking $i = jk$ , we get $a_m\leq j\leq b_m$, $1\leq m\leq l$ and $d(f_1^{jk}(z), f_1^{jk}(x_m))<\epsilon$. Now, the system $(X, f_{1,\infty})$ is $k$-periodic, so $f_1^{jk} = (f_1^k)^j$. Therefore, $d((f_1^k)^j(z), (f_1^k)^j(x_m))<\epsilon$, that is, $d(g^j(z), g^j(x_m))<\epsilon$, for all $a_m\leq j\leq b_m$, $1\leq m\leq l$. Also, $f_1^{kps}(z) = z$, for each $s\in\mathbb{N}$ and in particular for $s = 1$, we get that $(f_1^k)^p(z) = z$ and hence $g^p(z) = z$, for $p> M_1 + b_l - a_1$. Thus, the autonomous system $(X, g)$ has SSP.
\end{proof}

\begin{thm} Let $(X, d_1)$, $(Y, d_2)$ be two metric spaces and $(X, f_{1,\infty})$, $(Y, g_{1,\infty})$ be the two non-autonomous systems such that $f_{1,\infty}$ is topologically semi-conjugate to $g_{1,\infty}$. If $f_{1,\infty}$ has SSP, then $g_{1,\infty}$ also has SSP.
\end{thm}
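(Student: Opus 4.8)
The plan is to transport the specification data along the semi-conjugacy $h$, using surjectivity of $h$ to pull the prescribed points from $Y$ back to $X$, invoking the specification property of $f_{1,\infty}$ there, and then using uniform continuity of $h$ to push the resulting periodic orbit and its approximation estimates forward to $Y$. First I would record the single identity that makes everything pass through $h$. From the defining relation $g_n(h(x)) = h(f_n(x))$, that is $g_n\circ h = h\circ f_n$, a straightforward induction on $n$ (together with $g_1^n = g_n\circ\cdots\circ g_1$ and $f_1^n = f_n\circ\cdots\circ f_1$) yields $g_1^n\circ h = h\circ f_1^n$ for every $n\in\mathbb{N}$. This intertwining identity is exactly what lets both the periodicity condition and the distance estimates be transferred between the two systems.

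Next, fixing an arbitrary $\epsilon>0$, I would use that $X$ is a compact metric space and $h$ is continuous, so $h$ is uniformly continuous; hence there is $\delta>0$ with $d_1(x,x')<\delta \implies d_2(h(x),h(x'))<\epsilon$. Let $M(\delta)$ be the constant furnished by SSP for $f_{1,\infty}$, and set $M'(\epsilon):=M(\delta)$. Given points $y_1,\ldots,y_k\in Y$, a sequence $a_1\le b_1<\cdots<a_k\le b_k$ with $a_j-b_{j-1}>M'(\epsilon)$, and any $p>M'(\epsilon)+b_k-a_1$, I would choose (by surjectivity of $h$) points $x_i\in X$ with $h(x_i)=y_i$. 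Applying SSP of $f_{1,\infty}$, with tolerance $\delta$, to $x_1,\ldots,x_k$ and the same integer data produces a periodic point $z\in X$ of period $p$ satisfying $d_1(f_1^j(z),f_1^j(x_i))<\delta$ for all $a_i\le j\le b_i$, $1\le i\le k$.

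Finally I would set $w:=h(z)\in Y$ and verify that it witnesses SSP for $g_{1,\infty}$. By the intertwining identity, $g_1^{pk}(w)=g_1^{pk}(h(z))=h(f_1^{pk}(z))=h(z)=w$ for every $k\in\mathbb{N}$, so $w$ is a periodic point of period $p$ for $g_{1,\infty}$. Moreover, for each $j$ with $a_i\le j\le b_i$ one has $g_1^j(w)=h(f_1^j(z))$ and $g_1^j(y_i)=g_1^j(h(x_i))=h(f_1^j(x_i))$, so $d_2\bigl(g_1^j(w),g_1^j(y_i)\bigr)=d_2\bigl(h(f_1^j(z)),h(f_1^j(x_i))\bigr)<\epsilon$ by the choice of $\delta$. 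Hence $g_{1,\infty}$ has SSP with constant $M'(\epsilon)=M(\delta)$. The argument is largely bookkeeping; the only points requiring care are the inductive check of $g_1^n\circ h=h\circ f_1^n$ and the role of compactness of $X$, which is precisely what guarantees the uniform continuity needed to convert $\delta$-closeness in $X$ into $\epsilon$-closeness in $Y$. The period $p$ transfers verbatim because the periodicity condition $f_1^{pk}(z)=z$ survives application of $h$ unchanged.
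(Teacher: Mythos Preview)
Your proof is correct and follows essentially the same route as the paper: pull the target points back via surjectivity of $h$, apply SSP of $f_{1,\infty}$ with tolerance $\delta$ obtained from uniform continuity of $h$ (using compactness of $X$), and push the resulting periodic point forward through the intertwining relation $g_1^n\circ h = h\circ f_1^n$. The only cosmetic blemish is that you reuse the letter $k$ both for the number of specified points and for the periodicity multiplier in $g_1^{pk}(w)=w$; renaming one of them would avoid ambiguity.
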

\begin{proof}Since $f_{1,\infty}$ is topologically semi-conjugate to $g_{1,\infty}$, therefore there exists a continuous surjective map $h : X\to Y$ such that $h\circ f_n =  g_n\circ h$, for each $n\in\mathbb{N}$. Let $\epsilon>0$ be arbitrary, then by uniform continuity of $h$ for given $\epsilon>0$, there exists $\delta>0$, such that \begin{equation}\label{F1}d_1(x, y)<\delta  \ \implies d_2(h(x), h(y))<\epsilon, \  \text{for any} \ x, \ y\in X. \end{equation} Let and $M(\delta)$ be as in the definition of SSP for $f_{1,\infty}$ and for the given $\epsilon$, take $M(\epsilon) = M(\delta)$. Consider a sequence $a_1\leq b_1< a_2 \leq b_2< \cdots <a_k \leq b_k$ of non-negative integers with $a_j - b_{j-1}> M(\delta)$ ($2\leq j\leq k$) and $y_1$, $y_2$, \ldots, $y_k\in Y$. Now as $h$ onto, so corresponding to each $y_i$, there exist $x_i\in X$ such that $h(x_i) = y_i$, for each $i = 1, 2, \ldots, k$. Since $f_{1,\infty}$ has SSP, therefore there exists $x\in X$ such that $d_1(f_1^j(x), f_1^j(x_i))<\delta$, for $a_i\leq j\leq b_i$ and $f_1^{pm}(x) = x$, for $p>M(\epsilon)+b_k-a_1$ and each $m\in\mathbb{N}$. Therefore, by (\ref{F1}), we have
\begin{equation}\label{F2} 
d_2(h(f_1^j(x)), h(f_1^j(x_i)))<\epsilon, \ \text{for} \ a_i\leq j\leq b_i \end{equation}  Taking $y = h(x)$ and using $h\circ f_n =  g_n\circ h$, we get that for any $j\in\mathbb{N}$, $g_1^j(y) = g_j\circ \cdots \circ g_1(h(x)) = g_j\circ \cdots \circ h(f_1(x)) = \cdots = h(f_1^j(x))$. Thus, using (\ref{F2}), we get $d_2(g_1^j(y), g_1^j(y_i)) = d_2(h(f_1^j(x)), h(f_1^j(x_i)))<\epsilon$, for $a_i\leq j\leq b_i$, $1\leq i\leq k$. Also $g_1^{pm}(y) = h(f_1^{pm}(x)) = h(x) = y$, for each $m\in\mathbb{N}$. Therefore, $(Y, g_{1,\infty})$ has SSP.
\end{proof}
\begin{cor}\label{ck} Let $(X, f_{1,\infty})$ and $(Y, g_{1,\infty})$ be two non-autonomous systems such that $f_{1,\infty}$ is topologically conjugate to $g_{1,\infty}$. Then $f_{1,\infty}$ has SSP if and only if $g_{1,\infty}$ has SSP.
\end{cor}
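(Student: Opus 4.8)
The plan is to deduce the corollary directly from the preceding theorem by exploiting the fact that a topological conjugacy furnishes topological semi-conjugacies in both directions. Recall that when $f_{1,\infty}$ is topologically conjugate to $g_{1,\infty}$, the intertwining map $h:X\to Y$ is a homeomorphism, hence in particular a continuous surjection satisfying $h\circ f_n = g_n\circ h$ for each $n\in\mathbb{N}$. This is exactly the condition for $f_{1,\infty}$ to be topologically semi-conjugate to $g_{1,\infty}$, so by the previous theorem, if $f_{1,\infty}$ has SSP then $g_{1,\infty}$ has SSP.

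For the reverse implication I would use the inverse homeomorphism $h^{-1}:Y\to X$, which is again continuous and surjective. The only step needing a short verification is the commuting relation for $h^{-1}$: starting from $h\circ f_n = g_n\circ h$ and composing with $h^{-1}$ on the left and on the right yields $f_n\circ h^{-1} = h^{-1}\circ g_n$ for each $n\in\mathbb{N}$. Thus $h^{-1}$ exhibits $g_{1,\infty}$ as topologically semi-conjugate to $f_{1,\infty}$, and applying the previous theorem once more shows that if $g_{1,\infty}$ has SSP then $f_{1,\infty}$ has SSP. Combining the two implications gives the stated equivalence.

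I expect no genuine obstacle here, since the argument is merely a symmetric double application of the semi-conjugacy theorem. The only point deserving care is that the theorem is invoked with the roles of the two systems interchanged in the second direction; this is legitimate precisely because invertibility of $h$ makes the intertwining relation symmetric, so the hypotheses of the theorem (a \emph{continuous surjective} intertwining map) are met by both $h$ and $h^{-1}$. Everything else is immediate from the definition of topological conjugacy.
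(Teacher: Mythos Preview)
Your proposal is correct and is exactly the intended argument: the paper states this as an immediate corollary of the preceding semi-conjugacy theorem, with no separate proof, precisely because a conjugacy $h$ and its inverse $h^{-1}$ each furnish a semi-conjugacy in one direction. Your verification of the intertwining relation for $h^{-1}$ is the only thing one might write out, and you have done so; nothing further is needed.
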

\begin{thm}\label{KK} Let $(X, d_1)$ and $(Y, d_2)$ be  two metric spaces. Then  non-autonomous systems $(X, f_{1,\infty})$ and $(Y, g_{1,\infty})$ have SSP if and only if the non-autonomous systems $(X\times Y, f_{1,\infty}\times g_{1,\infty})$ has SSP.
\end{thm}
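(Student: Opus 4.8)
The plan is to prove both implications by transferring the specification data coordinatewise. The crucial observation — entirely parallel to what the preliminaries already record for the case $g_n = f_n\times f_n$ — is that the $n$-th iterate of the product family factors as a product of iterates, namely $(f_{1,\infty}\times g_{1,\infty})_1^n = f_1^n\times g_1^n$, and that the product metric $\tilde d((x_1,y_1),(x_2,y_2)) = \max\{d_1(x_1,x_2),d_2(y_1,y_2)\}$ turns a single $\epsilon$-estimate on $X\times Y$ into one $\epsilon$-estimate in each factor, and conversely.

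For the forward implication, assume both factors have SSP. Given $\epsilon>0$, I would set $M(\epsilon) = \max\{M_1(\epsilon),M_2(\epsilon)\}$, where $M_1,M_2$ are the SSP constants for $(X,f_{1,\infty})$ and $(Y,g_{1,\infty})$. Given points $(x_1,y_1),\dots,(x_k,y_k)$, a sequence $a_1\le b_1<\cdots<a_k\le b_k$ with $a_j-b_{j-1}>M(\epsilon)$, and $p> M(\epsilon)+b_k-a_1$, the same gap and period bounds hold relative to both $M_1$ and $M_2$, since $M(\epsilon)\ge M_i(\epsilon)$. Applying SSP in $X$ to $x_1,\dots,x_k$ yields a periodic point $z_1$ of period $p$ with $d_1(f_1^j(z_1),f_1^j(x_i))<\epsilon$ on each block, and applying SSP in $Y$ to $y_1,\dots,y_k$ yields $z_2$ of period $p$ with $d_2(g_1^j(z_2),g_1^j(y_i))<\epsilon$. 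The one point deserving care is that SSP furnishes a periodic point for \emph{every} $p$ above the threshold, so one is free to demand the \emph{same} period $p$ in each factor. Setting $z=(z_1,z_2)$, one then has $(f_{1,\infty}\times g_{1,\infty})_1^{pm}(z) = (f_1^{pm}(z_1),g_1^{pm}(z_2)) = z$ for all $m\in\mathbb{N}$, so $z$ is periodic of period $p$, while the max-metric delivers the joint estimate $\tilde d\big((f_1^j\times g_1^j)(z),(f_1^j\times g_1^j)(x_i,y_i)\big)<\epsilon$.

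For the converse, assume the product has SSP with constant $M(\epsilon)$; I would show $(X,f_{1,\infty})$ has SSP with the \emph{same} constant, the argument for $Y$ being symmetric. Fix an arbitrary basepoint $y_0\in Y$. Given $x_1,\dots,x_k\in X$ and admissible data $a_j,b_j,p$ relative to $M(\epsilon)$, apply product SSP to the padded points $(x_1,y_0),\dots,(x_k,y_0)$ to obtain a periodic point $(z_1,z_2)$ of period $p$ satisfying the joint $\epsilon$-estimate. Reading off the first coordinate of $\tilde d$ gives $d_1(f_1^j(z_1),f_1^j(x_i))<\epsilon$ on each block, and periodicity of $(z_1,z_2)$ forces $f_1^{pm}(z_1)=z_1$ for all $m$, so $z_1$ is periodic of period $p$ in $X$. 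Hence $(X,f_{1,\infty})$ has SSP.

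I expect no serious obstacle here; the content is essentially bookkeeping. The only steps worth flagging explicitly are the synchronization of periods in the forward direction — one must invoke SSP with a single common $p$ in both factors rather than with two unrelated periods — and the routine but indispensable use of the $\max$ structure of $\tilde d$, which makes the two coordinatewise $\epsilon$-estimates exactly equivalent to one product estimate.
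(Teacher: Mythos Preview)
Your proof is correct, and in the forward direction it is in fact cleaner than the paper's. Both arguments set $M(\epsilon)=\max\{M_1(\epsilon),M_2(\epsilon)\}$, but the paper then produces periodic points in the two factors with possibly different periods $p_1,p_2$ and passes to $p=\operatorname{lcm}(p_1,p_2)$. As you explicitly flag, the definition of SSP requires a periodic point of \emph{each} prescribed period $p>M(\epsilon)+b_k-a_1$, so the right move is exactly yours: invoke SSP in both factors with the \emph{same} given $p$ and pair the results. The paper's LCM construction only yields some period above the threshold, not the arbitrary one demanded, so your synchronization step is not merely tidier but actually closes a small gap in the paper's argument.

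For the converse the two proofs are essentially identical: the paper applies product SSP to arbitrary pairs $(x_i,y_i)$ and reads off both coordinates at once, while you pad with a fixed basepoint $y_0$ and read off the first coordinate. Either way the $\max$-structure of $\tilde d$ and the coordinatewise factoring $(f_{1,\infty}\times g_{1,\infty})_1^n=f_1^n\times g_1^n$ do all the work.
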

\begin{proof}Let $(X, f_{1,\infty})$ and $(Y, g_{1,\infty})$ have SSP and $h_{1,\infty} = f_{1,\infty}\times g_{1,\infty}$. We show that $h_{1,\infty}$ has SSP. Let $\epsilon>0$ be arbitrary and $M_1(\epsilon)$, $M_2(\epsilon)$ be as in the definition of SSP for $f_{1,\infty}$ and $g_{1,\infty}$, respectively and $M(\epsilon) = \max\{M_1(\epsilon), M_2(\epsilon)\}$. Consider a sequence $a_1\leq b_1< a_2 \leq b_2< \cdots <a_k \leq b_k$ of non-negative integers with $a_j - b_{j-1}> M(\epsilon)$ ($2\leq j\leq k$) and $(x_i, y_i)\in X\times Y$, for $i = 1, 2, \ldots, k$. Now as $M(\epsilon)\geq M_1(\epsilon)$ and $M(\epsilon)\geq M_2(\epsilon)$, so using SSP of $f_{1,\infty}$ and $g_{1,\infty}$, there exist $x\in X$ and $y\in Y$ such that 
\begin{equation}\label{p1}
d_1(f_1^j(x), f_1^j(x_i)) <\epsilon, \ f_1^{mp_1}(x) = x, \ \text{for} \ p_1>M(\epsilon)+b_k-a_1,
\end{equation}
\begin{equation}\label{p2}
d_2(g_1^j(y), g_1^j(y_i)) <\epsilon, \ g_1^{mp_2}(y) = y, \ \text{for} \ p_2>M(\epsilon)+b_k-a_1,
\end{equation}
 for each $a_i\leq j\leq b_i$, $i = 1, 2, \ldots, k$ and each $m\in\mathbb{N}$. Let $p$ be the least common multiple of $p_1$ and $p_2$, then $p>M(\epsilon)+b_k-a_1$ and $h_1^{pm}(x, y) = (f_1^{pm}\times g_1^{pm})(x, y) = (f_1^{pm}(x), g_1^{pm}(y)) = (x, y)$, for each $m\in\mathbb{N}$. Using (\ref{p1}), (\ref{p2}) and the definition of the product metric $\tilde{d}$, we get $\tilde{d}(h_1^j(x, y), h_1^j(x_i, y_i))<\epsilon$, for each $a_i\leq j\leq b_i$, $i = 1, 2, \ldots, k$. Thus, $(X\times Y, f_{1,\infty}\times g_{1,\infty})$ has SSP.
 
 Conversely, suppose $ h_{1,\infty} = f_{1,\infty}\times g_{1,\infty}$ has SSP. Let $\delta>0$ be arbitrary and consider a sequence of non-negative integers $a_1\leq b_1< a_2 \leq b_2< \cdots <a_k \leq b_k$  with $a_j - b_{j-1}> M(\delta)$, where $M(\delta)$ is the positive integer as in the definition of SSP for $h_{1,\infty}$. By SSP of $h_{1,\infty}$, for any $(x_i, y_i)\in X\times Y$, $1\leq i\leq k$, there exists $(x, y)\in X\times Y$ such that $\tilde{d}(h_1^j(x, y), h_1^j(x_i, y_i))<\delta$, for $a_i\leq j\leq b_i$ and $h_1^{pm}(x, y) = (x, y)$, for $p>M(\delta)+b_k-a_1$ and each $m\in\mathbb{N}$. Now, $\tilde{d}(h_1^j(x, y), h_1^j(x_i, y_i)) = \max\{d_1(f_1^j(x), f_1^j(x_i)), d_2(g_1^j(y), g_1^j(y_i))\}<\delta$, which implies that $d_1(f_1^j(x), f_1^j(x_i))<\delta$ and $d_2(g_1^j(y), g_1^j(y_i)) <\delta$, $a_i\leq j\leq b_i$, $1\leq i\leq k$. Also, $f_1^{pm}\times g_1^{pm}(x, y) = (f_1^{pm}(x), g_1^{pm}(y)) = (x , y)$, implying that $f_1^{pm}(x) = x$ and $g_1^{pm}(y) = y$, for $p>M(\delta)+b_k-a_1$ and each $m\in\mathbb{N}$. Thus, both $f_{1,\infty}$ and $g_{1,\infty}$ have SSP.
 \end{proof}
 \begin{rmk}\label{RR} We have the following conclusions. \begin{enumerate} \item
 Above result is true for any finite product by induction.
 \item
 By similar arguments, Theorems \ref{K}$-$\ref{KK} and Corollary \ref{ck} are also true for WSP and QSP.\end{enumerate}
 \end{rmk}
\section{Specification Properties and Chaos}
In this section, we first show that for a non-autonomous system, QSP is equivalent to topological mixing on a compact metric space. Counter example is given to justify that result is not true when either the space is not compact or the family is not surjective. It is shown that a $k$-periodic non-autonomous system on intervals having WSP or QSP is Devaney chaotic and if the system has SSP, then this result is true in general.
\begin{thm}\label{Q} A non-autonomous system $(X, f_{1,\infty})$ has QSP if and only if $(X, f_{1,\infty})$ is topologically mixing.
\end{thm}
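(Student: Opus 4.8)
The plan is to prove the two implications separately and directly from the definitions, invoking the surjectivity of the family in the forward direction and the compactness of $X$ in the backward direction; these are precisely the two standing hypotheses whose necessity the counterexample announced at the start of the section is meant to expose.

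For the implication \textbf{QSP} $\implies$ \emph{topological mixing}, I would fix a pair of non-empty open sets $U,V$, pick $x_1\in U$ with $B_d(x_1,\epsilon_1)\subseteq U$, and pick $w\in V$ with $B_d(w,\eta)\subseteq V$, then set $\epsilon=\min\{\epsilon_1,\eta\}$ and let $M(\epsilon)$ be the constant supplied by QSP. For each $n\geq M(\epsilon)$, I would use that $f_1^n=f_n\circ\cdots\circ f_1$ is surjective (being a composition of surjective maps, since $f_{1,\infty}$ is a surjective family) to choose $x_2$ with $f_1^n(x_2)=w$. Applying QSP to $x_1,x_2$ and this $n$ produces $z$ with $d(z,x_1)<\epsilon$ and $d(f_1^n(z),f_1^n(x_2))<\epsilon$, whence $z\in B_d(x_1,\epsilon_1)\subseteq U$ and $f_1^n(z)\in B_d(w,\eta)\subseteq V$. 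Thus $n\in N_{f_{1,\infty}}(U,V)$ for every $n\geq M(\epsilon)$, so $N_{f_{1,\infty}}(U,V)\supseteq[M(\epsilon),\infty)$ and the system is mixing.

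For the converse, \emph{topological mixing} $\implies$ \textbf{QSP}, I would fix $\epsilon>0$ and use compactness of $X$ to extract a finite cover by balls $B_d(p_1,\epsilon/2),\dots,B_d(p_r,\epsilon/2)$. For each ordered pair $(i,j)$ mixing yields a threshold $n_{ij}$ with $f_1^n\bigl(B_d(p_i,\epsilon/2)\bigr)\cap B_d(p_j,\epsilon/2)\neq\emptyset$ for all $n\geq n_{ij}$, and I would set $M(\epsilon)=\max_{i,j}n_{ij}$, which is finite because there are only finitely many pairs. Then for arbitrary $x_1,x_2\in X$ and $n\geq M(\epsilon)$, choosing $i$ with $x_1\in B_d(p_i,\epsilon/2)$ and $j$ with $f_1^n(x_2)\in B_d(p_j,\epsilon/2)$, I would pull out a point $z\in B_d(p_i,\epsilon/2)$ with $f_1^n(z)\in B_d(p_j,\epsilon/2)$; two applications of the triangle inequality give $d(z,x_1)<\epsilon$ and $d(f_1^n(z),f_1^n(x_2))<\epsilon$, which is exactly QSP.

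The main obstacle sits in the forward direction: QSP only asserts that $f_1^n(z)$ is \emph{close to} $f_1^n(x_2)$, not that it falls into the prescribed open set $V$, so one cannot read off $f_1^n(U)\cap V\neq\emptyset$ directly. The device that resolves this is surjectivity, which lets me realize an arbitrary target point $w\in V$ as $f_1^n(x_2)$ and thereby transfer the closeness of images into membership in $V$. In the reverse direction the delicate point is the uniformity of $M(\epsilon)$ over \emph{all} pairs $x_1,x_2$, and this is exactly what compactness delivers through the finite subcover; the remaining estimates with the radii $\epsilon_1,\eta$ and the $\epsilon/2$-balls I expect to be routine.
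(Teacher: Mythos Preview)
Your proposal is correct and follows essentially the same route as the paper: surjectivity of $f_1^n$ to place a preimage in $V$ for the forward direction, and a finite $\epsilon/2$-cover together with the maximum of the finitely many mixing thresholds for the converse. If anything, your version is slightly more careful---you take $\epsilon=\min\{\epsilon_1,\eta\}$ rather than assuming a common radius, and you make explicit the finite maximum $M(\epsilon)=\max_{i,j}n_{ij}$ that the paper leaves implicit.
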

\begin{proof}
Suppose $(X, f_{1,\infty})$ has QSP.  Let $U$, $V\subseteq X$ be any two non-empty open subsets, then we can find $\epsilon>0$ such that $B_d(x, \epsilon)\subseteq U$ and $B_d(y, \epsilon)\subseteq V$, for any $x\in U$, $y\in V$. Let $M(\epsilon)$ be as in the definition of QSP. For any $n\geq M(\epsilon)$, using surjectivity of $(X, f_{1,\infty})$, for $y\in V$, there exists $z\in X$ such that $f_1^n(z) = y$. By QSP of $f_{1,\infty}$, we have  existence of a $w\in X$ such that $d(x, w)<\epsilon$ and $d(f_1^n(w), f_1^n(z))<\epsilon$. Therefore, we get $w\in B_d(x, \epsilon) \subseteq U$ and $f_1^n(w)\in B_d(y, \epsilon)\subseteq V$ and hence $f_1^n(U)\cap V\ne\emptyset$, for all $n\geq M(\epsilon)$. Thus, $f_{1,\infty}$ is topologically mixing.

Conversely, let $f_{1,\infty}$ be topologically mixing. Since $X$ is compact, therefore for any  open cover $\{x\in X: B_d(x, \epsilon/2)\}$ of $X$, there exists $\{x_i\}_{i=1}^k$ such that $X = \bigcup_{i=1}^k B_d(x_i, \epsilon/2)$, for any $\epsilon>0$. Now, by topological mixing of $f_{1,\infty}$, there exists a positive integer $M(\epsilon/2)$ such that \begin{equation}\label{Q1} f_1^n(B_d(x_i, \epsilon/2)) \cap B_d(x_j, \epsilon/2)\ne\emptyset, \ \text{for all} \ n\geq M(\epsilon/2), \ \text{and any} \ 1\leq i, j \leq k.
\end{equation}
Let $y_1$, $y_2\in X$ be arbitrary, then for any $n\geq M(\epsilon/2)$, $y_1$, $f_1^n(y_2)\in X$, there exist $1\leq r, s\leq k$ such that $y_1\in B_d(x_r, \epsilon/2)$ and $f_1^n(y_2)\in B_d(x_s, \epsilon/2)$. Also, by (\ref{Q1}), there exists $z\in B_d(x_r, \epsilon/2)\subseteq X$ such that $f_1^n(z)\in B_d(x_s, \epsilon/2)$. Thus, using triangle inequality we get that $d(z, y_1)<\epsilon$ and $d(f_1^n(z), f_1^n(y_2))<\epsilon$ implying that $(X, f_{1,\infty})$ has QSP.
\end{proof}
If we remove the condition of surjection of the family $f_{1,\infty}$ or the condition of compactness from the above theorem, then QSP may not be equivalent to topological mixing. The following example justifies this.
\begin{exm}Consider the non-autonomous system $(\mathbb{R}, f_{1,\infty})$, where $f_n(x) = 1/2^n$, for each $n\in\mathbb{N}$ and $f_{1,\infty} = \{f_n\}_{n=1}^\infty$. Then $f_{1,\infty}$ is not surjective. Since for any $z\in\mathbb{R}$, $\lim_{k\to\infty} f_1^k(z) = 0$, therefore for any $\epsilon>0$, there exists $M(\epsilon)>0$ such that $|f_1^k(z) - 0 |< \epsilon/2$, for all $k\geq M(\epsilon)$. For any non-negative integers $a_1\leq b_1<a_2 \leq b_2$ with $a_2-b_1> M(\epsilon)$ and any pair of points $x_1$, $x_2\in\mathbb{R}$, choosing $z = y_1$, we get that $|f_1^j(z) - f_1^j(y_1)| = 0<\epsilon$, for all $a_1\leq j\leq b_1$ and for all $a_2\leq j\leq b_2$, we have $|f_1^j(z) - f_1^j(y_2)|\leq |f_1^j(z)| + |f_1^j(y_2)|<\epsilon/2 + \epsilon/2 = \epsilon$. Thus, $f_{1,\infty}$ satisfies all the conditions of WSP for $s = 2$ and hence of QSP but $(\mathbb{R}, f_{1,\infty})$ is not topologically transitive and hence cannot be topologically mixing.
\end{exm}
Next, we provide an example showing that QSP need not imply WSP.
\begin{exm}Let $S^1$ be the unit circle and  $f_n : S^1 \to S^1$ be given by $f_n(e^{i\theta}) = e^{i\frac{n+1}{n}\theta}$, for each $n\in\mathbb{N}$. Let $f_{1,\infty} = \{f_n\}_{n=1}^\infty$, then $(S^1, f_{1,\infty})$ is topologically mixing on compact metric space $S^1$ with each $f_n$ being surjective and hence by Theorem \ref{Q}, the given system has QSP. In \cite[Example 2.2]{12}, authors have proved that $(S^1, f_{1,\infty})$ does not possess WSP. Thus, $(S^1, f_{1,\infty})$ has QSP but does not have WSP.
\end{exm}
By \cite[Lemma 13]{19} and Theorem \ref{Q}, we have the following result on QSP.
\begin{cor}\label{c4.1} Let $(X, f_{1,\infty})$ be a $k$-periodic non-autonomous system and $g = f_k\circ f_{k-1}\circ\cdots \circ f_1$. Then $(X, f_{1,\infty})$ has QSP if and only if  the corresponding autonomous system $(X, g)$  has QSP.
\end{cor}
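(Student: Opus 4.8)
The plan is to route everything through Theorem \ref{Q}, which identifies QSP with topological mixing. Since an autonomous system is the special case of a non-autonomous one with constant family $g_n = g$, Theorem \ref{Q} applies equally to $(X, f_{1,\infty})$ and to $(X, g)$: for the latter the orbit maps are $g_1^n = g^n$, and the hypotheses of Theorem \ref{Q} are met because $X$ is compact and $g = f_k \circ \cdots \circ f_1$ is surjective, being a composition of the surjective maps $f_i$. Thus both equivalences ``$(X, f_{1,\infty})$ has QSP iff it is topologically mixing'' and ``$(X, g)$ has QSP iff it is topologically mixing'' are available, and the corollary reduces to the single claim that, for a $k$-periodic system, $(X, f_{1,\infty})$ is topologically mixing if and only if $(X, g)$ is topologically mixing.

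First I would record the arithmetic of $k$-periodicity: writing $m = nk + r$ with $0 \le r < k$, periodicity gives $f_1^{nk} = (f_1^k)^n = g^n$ and more generally $f_1^{nk+r} = f_1^r \circ g^n$. This structural fact is exactly the content invoked from \cite[Lemma 13]{19}, and it is the only ingredient needed beyond Theorem \ref{Q}.

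For the forward direction (mixing of $f_{1,\infty}$ implies mixing of $g$) the argument is immediate: given non-empty open $U, V$, mixing of $f_{1,\infty}$ yields $N$ with $f_1^m(U) \cap V \ne \emptyset$ for all $m \ge N$; restricting to multiples $m = nk$ and using $f_1^{nk} = g^n$ shows $g^n(U) \cap V \ne \emptyset$ for all $n \ge \lceil N/k \rceil$. For the converse I would treat each residue $r \in \{0, 1, \ldots, k-1\}$ separately. Here surjectivity of $f_1^r$ is essential: the set $W_r = (f_1^r)^{-1}(V)$ is open and, crucially, non-empty, so mixing of $g$ furnishes $N_r$ with $g^n(U) \cap W_r \ne \emptyset$ for $n \ge N_r$; applying $f_1^r$ and using $f_1^{nk+r} = f_1^r \circ g^n$ gives $f_1^{nk+r}(U) \cap V \ne \emptyset$. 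Taking $N = k(\max_r N_r + 1)$ covers all sufficiently large $m$ across every residue class, establishing mixing of $f_{1,\infty}$.

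Chaining the three equivalences then yields the corollary. I expect the only real subtlety to be the converse mixing implication, where one must split $m$ by its residue modulo $k$ and lean on surjectivity of the intermediate maps $f_1^r$ to guarantee that the pulled-back target sets $W_r$ are non-empty; without the standing surjectivity hypothesis on the family this step, and hence the corollary, would fail. Everything else is bookkeeping with the index $m = nk + r$.
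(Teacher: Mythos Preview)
Your proposal is correct and follows essentially the same route as the paper: the paper derives the corollary by combining Theorem~\ref{Q} (QSP $\Leftrightarrow$ topological mixing) with \cite[Lemma~13]{19}, which is precisely the statement that a $k$-periodic system $(X,f_{1,\infty})$ is topologically mixing if and only if $(X,g)$ is. You have simply unpacked the content of that cited lemma, and your argument for it---restricting to multiples of $k$ in one direction, and splitting by residues modulo $k$ using surjectivity of $f_1^r$ in the other---is the standard one.
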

\begin{rmk} We know that every topologically mixing non-autonomous  system has sensitive dependence on initial conditions, so if the non-autonomous system has SSP or WSP or the QSP, then it has sensitive dependence on initial conditions. 
\end{rmk}
\begin{cor}If the non-autonomous system $(X, f_{1,\infty})$ has WSP (QSP), then it is Wiggins chaotic.
\end{cor}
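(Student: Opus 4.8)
The plan is to unwind this corollary entirely as a consequence of the definition of Wiggins chaos together with two facts already in hand: the equivalence of QSP and topological mixing (Theorem \ref{Q}), and the observation recorded in the preceding remark that a topologically mixing non-autonomous system has sensitive dependence on initial conditions. Recall that Wiggins chaos asks only for conditions (1) topological transitivity and (3) sensitivity from the Devaney definition, so it suffices to verify precisely these two.

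First I would collapse both hypotheses to a single assumption. The implication chain \ SSP $\implies$ WSP $\implies$ QSP \ stated just after the definition of QSP shows that a system with WSP also has QSP, so in either case of the statement we may assume $(X, f_{1,\infty})$ has QSP. Then, by Theorem \ref{Q}, QSP is equivalent to topological mixing, so the system is topologically mixing: for every pair of non-empty open sets $U, V \subseteq X$ there is $n \in \mathbb{N}$ with $N_{f_{1,\infty}}(U,V) \supseteq [n,\infty)$.

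Next I would read off the two required conditions. For (1), since $[n,\infty)$ is non-empty, the inclusion above gives $N_{f_{1,\infty}}(U,V) \ne \emptyset$ for every such pair $U, V$, which is exactly topological transitivity. For (3), I would appeal to the fact quoted in the preceding remark, that a topologically mixing non-autonomous system exhibits sensitive dependence on initial conditions; this furnishes a constant of sensitivity $\delta > 0$ as demanded by the definition. With (1) and (3) both established, $(X, f_{1,\infty})$ satisfies the definition of Wiggins chaos.

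I do not expect a substantive obstacle here: all of the analytic content is absorbed into Theorem \ref{Q} and into the cited implication that mixing yields sensitivity, so the argument is a short assembly of existing results. The only points deserving a line of care are making explicit that transitivity is a genuine weakening of mixing (the non-emptiness of $[n,\infty)$), and noting that the reduction from WSP to QSP legitimately covers both parenthetical cases of the statement simultaneously.
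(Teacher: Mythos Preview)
Your proposal is correct and follows essentially the same route as the paper: reduce to topological mixing (via Theorem \ref{Q} after collapsing WSP to QSP), then read off transitivity directly from mixing and invoke the preceding remark for sensitivity. The paper leaves this corollary unproved, but the intended argument is exactly this assembly of Theorem \ref{Q} and the remark.
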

We know that for autonomous dynamical systems, WSP on intervals implies it is Devaney chaotic. In \cite{16}, authors have shown that on unit intervals topological transitivity need not imply Devaney chaos. Therefore, WSP on intervals may not imply Devaney chaos. We have  following result giving a condition under which WSP (QSP) implies Devaney chaos.
\begin{thm}\label{DC} Let $(I, f_{1,\infty})$ be a $k$-periodic non-autonomous system, where $I$ is any interval. If the system $(I, f_{1,\infty})$ has WSP (QSP), then it is Devaney chaotic. 
\end{thm}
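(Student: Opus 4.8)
The plan is to reduce the three Devaney conditions to the single outstanding condition of dense periodicity, and then to import that condition from the associated autonomous system $(I,g)$. Since WSP implies QSP, the hypothesis yields QSP in either case. By the Corollary immediately preceding this theorem, QSP already forces the system to be Wiggins chaotic, so topological transitivity (condition $(1)$) and sensitive dependence on initial conditions (condition $(3)$) hold for free. Hence the whole burden of the proof is to produce a dense set of periodic points, i.e.\ to verify condition $(2)$.

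For the periodic points I would pass to the autonomous map $g=f_k\circ f_{k-1}\circ\cdots\circ f_1=f_1^k$. By Corollary \ref{c4.1}, $(I,f_{1,\infty})$ having QSP is equivalent to $(I,g)$ having QSP, and by the autonomous instance of Theorem \ref{Q} this makes $(I,g)$ topologically mixing, in particular topologically transitive. Since the standing compactness assumption on $X$ forces $I$ to be a compact interval, I would then invoke the classical theorem of Vellekoop and Berglund that every topologically transitive continuous self-map of a compact interval has a dense set of periodic points. This gives density in $I$ of the periodic point set $P(g)$ of the autonomous system.

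It then remains to transfer these points back to the non-autonomous system. The key identity is that $k$-periodicity of $f_{1,\infty}$ gives $f_1^{mk}=(f_1^k)^m=g^m$ for every $m$, because each length-$k$ block $f_{jk+1}^k=f_{jk+k}\circ\cdots\circ f_{jk+1}$ collapses to $f_k\circ\cdots\circ f_1=g$ under $f_{n+k}=f_n$. Consequently, if $x$ satisfies $g^q(x)=x$, then setting $n=kq$ yields $f_1^{nm}(x)=g^{qm}(x)=x$ for all $m\in\mathbb{N}$, so $x$ is a periodic point of $(I,f_{1,\infty})$ in the sense of the Section 2 definition. Thus $P(g)\subseteq P(f_{1,\infty})$, and density of $P(g)$ forces density of the periodic points of the non-autonomous system, establishing condition $(2)$ and hence Devaney chaos.

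I expect the main obstacle to be the honest invocation of the interval result: one must be certain that $I$ is a genuine compact interval so that Vellekoop--Berglund applies, and one must check that a $g$-periodic point really meets the Section 2 notion of non-autonomous periodicity, which is precisely the step where $k$-periodicity (through $f_1^{mk}=g^m$) is indispensable. This is also the point where the contrast with the purely transitive non-autonomous setting (where dense periodicity can fail, as recalled via \cite{16}) is resolved, since the specification hypothesis lets us route through the well-behaved autonomous map $g$. The transitivity and sensitivity components are comparatively routine, being already packaged in the preceding corollary and remark.
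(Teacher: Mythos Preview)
Your proposal is correct and follows essentially the same route as the paper: pass to the autonomous map $g=f_k\circ\cdots\circ f_1$, use interval dynamics (topological mixing/transitivity on a compact interval forces dense periodic points), and transfer periodic points back via $f_1^{mk}=g^m$. The only cosmetic differences are that the paper invokes Remark~\ref{RR} rather than Corollary~\ref{c4.1} to push the specification property down to $(I,g)$, and cites \cite[Lemma~1]{19} for the periodic-point transfer that you spell out explicitly.
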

\begin{proof}Since $(I, f_{1,\infty})$ has WSP (QSP), therefore by Remark \ref{RR}, the corresponding autonomous system $(I, f_k\circ\cdots\circ f_1)$ has WSP (QSP) and hence is topologically mixing. Now in autonomous systems, since topologically mixing on intervals implies Devaney chaos, therefore $(I, f_k\circ\cdots\circ f_1)$ has dense set of periodic points.  By \cite[Lemma 1]{19}, we get that $(I, f_{1,\infty})$ has dense set of periodic periodic points. Thus, $(I, f_{1,\infty})$ is  Devaney chaotic.\end{proof}
By Theorem \ref{Q} and Theorem \ref{DC}, we have the following result related to topological mixing for non-autonomous systems.
\begin{cor}Let $(I, f_{1,\infty})$ be a $k$-periodic non-autonomous system, where $I$ is any interval. If the system $(I, f_{1,\infty})$ is topologically mixing, then it is Devaney chaotic.
\end{cor}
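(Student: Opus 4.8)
The plan is to chain together the two results named in the statement, namely Theorem~\ref{Q} and Theorem~\ref{DC}, since the corollary is a direct formal consequence of them. The hypothesis supplies that $(I, f_{1,\infty})$ is topologically mixing, and the goal is to deduce that it is Devaney chaotic.

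First I would invoke Theorem~\ref{Q}, which establishes that for any non-autonomous system the QSP is equivalent to topological mixing. Applying this equivalence to $(I, f_{1,\infty})$, the assumed topological mixing immediately yields that the system possesses the quasi-weak specification property. Next, since $(I, f_{1,\infty})$ is by hypothesis a $k$-periodic non-autonomous system whose phase space is an interval, and since it has just been shown to satisfy QSP, I would apply Theorem~\ref{DC} directly: that theorem asserts precisely that a $k$-periodic system on an interval with WSP (QSP) is Devaney chaotic, so the desired conclusion follows at once.

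The main obstacle here is essentially nonexistent, as the argument is a two-step substitution into already-proven statements; the only point meriting a moment's care is to confirm that the two hypotheses required by Theorem~\ref{DC}---namely $k$-periodicity and that the underlying space is an interval---are exactly the standing assumptions carried by $(I, f_{1,\infty})$, so they transfer without any additional verification. Consequently the proof reduces to writing down these two invocations in sequence.
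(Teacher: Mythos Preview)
Your proposal is correct and matches the paper's approach exactly: the paper also derives this corollary directly by combining Theorem~\ref{Q} (topological mixing $\Leftrightarrow$ QSP) with Theorem~\ref{DC} ($k$-periodic on an interval with QSP $\Rightarrow$ Devaney chaotic), with no additional argument.
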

Next, we show that if the non-autonomous system has SSP, then it directly implies the system is Devaney chaotic.
\begin{thm}If the non-autonomous system $(X, f_{1,\infty})$ has SSP, then it has dense set of periodic points.
\end{thm}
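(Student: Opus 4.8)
The plan is to invoke the SSP in its most degenerate instance, namely with a single orbit segment consisting of just one point observed at a single time. Fix an arbitrary $x \in X$ and an arbitrary $\epsilon > 0$; the goal is to produce a periodic point lying within $\epsilon$ of $x$, which is exactly what density of the periodic set requires.

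Let $M(\epsilon)$ be the constant supplied by the SSP, and apply the definition with $k = 1$, the single point $x_1 = x$, and the collapsed block $a_1 = b_1 = 0$. Because there is only one block, the spacing hypothesis $a_j - b_{j-1} > M(\epsilon)$ is vacuously satisfied (there is no index $j$ in the range $2 \le j \le 1$). The only surviving requirement on the period reads $p > M(\epsilon) + b_1 - a_1 = M(\epsilon)$, so one may simply take $p = M(\epsilon) + 1$. SSP then furnishes a periodic point $z \in X$ of period $p$ with $d(f_1^j(z), f_1^j(x)) < \epsilon$ for the only admissible index $j = 0$. Since $f_1^0 = \mathrm{id}$, this reads precisely $d(z, x) < \epsilon$. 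As $x$ and $\epsilon$ were arbitrary, every open ball in $X$ meets the set of periodic points, so that set is dense.

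I do not anticipate any genuine obstacle here: the one-point, one-time specialization of SSP literally encodes the statement ``arbitrarily close to any point sits a periodic point.'' The only points needing a careful check are bookkeeping ones, namely that the degenerate choice $a_1 = b_1 = 0$ is admissible in the definition (it is, since $0 \le 0$ and both are non-negative integers) and that the collapse of the range $a_1 \le j \le b_1$ to the single value $j = 0$ returns the \emph{base point} $z$ rather than some forward iterate of it, which is what makes the conclusion $d(z,x)<\epsilon$ come out correctly. Consequently the argument is short, and the density of periodic points follows directly from the existence clause of SSP rather than from any auxiliary construction.
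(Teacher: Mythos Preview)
Your proof is correct and follows essentially the same approach as the paper: both apply SSP with the degenerate choice $a_1=b_1=0$ so that the tracking condition at $j=0$ reads $d(z,x)<\epsilon$, placing a periodic point in any prescribed ball. Your version is in fact slightly tidier, since you explicitly take $k=1$ and note that the gap condition is vacuous, whereas the paper writes out a general $k$-block setup before specializing.
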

\begin{proof}Let $x\in X$ be arbitrary and $U$ be an open neighborhood of $x$.  We need to show that there is a periodic point in $X$ intersecting $U$. We have $x\in U$, so there is an $\epsilon>0$ such that $B(x, \epsilon)\subseteq U$. Let $M(\epsilon)$ be as in the definition of SSP for $f_{1,\infty}$ and any non-negative integers $a_1\leq b_1< a_2 \leq b_2< \cdots <a_k \leq b_k$  with $a_j - b_{j-1}> M(\epsilon)$ and any $x_1$, $x_2$, \ldots, $x_k\in X$. Therefore, by SSP of $(X, f_{1,\infty})$, there exists a periodic point $z\in X$, of period $p$, that is, $f_1^{pm}(z) = z$, for every $m\in\mathbb{N}$ such that $d(f_1^j(z), f_1^j(x_i))<\epsilon$, for all $a_i\leq j\leq b_i$, $1\leq i\leq k$. In particular for $x\in X$ and $a_1 = b_1 = 0$ in the above sequence, we get $d(f_1^0(z), f_1^0(x))<\epsilon$, that is, $d(z, x)<\epsilon$ which implies $z\in B(x, \epsilon)\subseteq U$. Hence, $(X, f_{1,\infty})$ has dense set of periodic points.
\end{proof}
The following example justifies that the converse of the above theorem is not true in general.
\begin{exm}Let $f$ be any bijective continuous self map on $X$. Consider the non-autonomous system $(X, f_{1,\infty})$, where $f_{1,\infty} = \{f, f^{-1}, f, f^{-1}, f, f^{-1}, \ldots\}$. Since $f_1^{2k}(x) = x$, for each $k\in\mathbb{N}$ and any $x\in X$, therefore every point of $X$ is periodic and hence $(X, f_{1,\infty})$ has dense set of periodic points. But as $\mathcal{O}_{f_{1,\infty}}(x) = \{x, f(x)\}$, so $(X, f_{1,\infty})$ can never be topologically transitive and thus cannot possess SSP.
\end{exm}
\begin{cor}If the non-autonomous system $(X, f_{1,\infty})$ has SSP, then it Devaney chaotic.
\end{cor}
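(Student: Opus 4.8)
The plan is to verify directly the three defining conditions of Devaney chaos, assembling facts already established in the paper. First I would note that SSP $\implies$ QSP by the implication chain recorded just after the definition of QSP, and then invoke Theorem \ref{Q} to conclude that $(X, f_{1,\infty})$ is topologically mixing. Topological mixing yields topological transitivity at once: by the definition of mixing, for any pair of non-empty open sets $U, V$ we have $N_{f_{1,\infty}}(U,V) \supseteq [n, \infty)$ for some $n \in \mathbb{N}$, so in particular $N_{f_{1,\infty}}(U,V) \ne \emptyset$, which is exactly condition (1) of the definition of Devaney chaos.

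Next, for sensitive dependence on initial conditions (condition (3)), I would appeal to the remark recorded earlier stating that every topologically mixing non-autonomous system has sensitive dependence on initial conditions; since we have just shown the system is mixing, sensitivity follows immediately. For the density of periodic points (condition (2)), I would cite the theorem proved immediately above this corollary, which shows that SSP forces a dense set of periodic points. Combining transitivity, sensitivity, and density of periodic points gives that $(X, f_{1,\infty})$ is chaotic in the sense of Devaney.

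I expect no genuine analytical obstacle here, since the corollary is essentially a bookkeeping assembly of the mixing characterization (Theorem \ref{Q}), the sensitivity remark, and the preceding dense-periodic-points theorem. The only step warranting a word of care is the passage from mixing to transitivity, but this is immediate from comparing the two definitions as above. It is worth remarking that sensitivity is in fact redundant once transitivity and dense periodic points are known (on an infinite space), so the argument has some slack; nonetheless, invoking the sensitivity remark makes the verification of all three clauses fully explicit and self-contained.
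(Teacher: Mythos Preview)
Your proposal is correct and matches the paper's implicit reasoning: the corollary is stated without proof precisely because it is the assembly of the immediately preceding theorem (SSP gives dense periodic points), the remark that topological mixing implies sensitivity, and the observation that SSP forces mixing (hence transitivity). Your route to mixing via SSP $\Rightarrow$ QSP and Theorem~\ref{Q} is a harmless variant of the paper's direct appeal to SSP $\Rightarrow$ WSP $\Rightarrow$ mixing in the earlier remark.
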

Converse of the above result is not true in general as shown in the following example.
\begin{exm}\label{2} Let $\Sigma_2 =\{0, 1\}^\mathbb{Z}$ = $\{( \ldots, x_{-2}, x_{-1}$, \fbox{$x_0$}, $x_1, \ldots): x_i \in \{0,1\}, \ \text{for every}$ \  $i  \in \mathbb{Z}\}$ with metric  \[\rho(x,y) = \sum_{j=-\infty}^{\infty}\frac{|x_j-y_j|}{2^{|j|}} \]  
for any pair $ x= ( \ldots, x_{-2}, x_{-1}$, \fbox{$x_0$}, $ x_1, \ldots)$;  $y = ( \ldots, y_{-2}, y_{-1}$, \fbox{$y_0$}, $ y_1, \ldots) \in \Sigma_2$.  Define $\sigma: \Sigma_2 \to \Sigma_2$ by $\sigma(x) = ( \ldots, x_{-2}, x_{-1}, x_0, \fbox{$x_1$}, x_2, \ldots)$, where $x = ( \ldots, x_{-2}, x_{-1}, $ \fbox{$x_0$}, $ x_1, x_2 \ldots) \in \Sigma_2$, then $\sigma$ is a homeomorphism and is called the \emph{shift map} on $\Sigma_2$. Consider the non-autonomous system $(\Sigma_2, f_{1,\infty})$, where \[f_{1,\infty} = \{\sigma, \sigma^{-1}, \sigma^2, \sigma^{-2}, \sigma^3, \sigma^{-3}, \ldots\}.\] Let $U$ and $V$ be any two non-empty open subsets of $\Sigma_2$. Since $\sigma$ is topologically mixing, therefore there exists $k\in\mathbb{N}$ such that $\sigma^n(U) \cap V\ne\emptyset$, for all $n\geq k$. Now, $f_1^{2k-1} = \sigma^k$, which implies that $f_1^{2k-1}(U) \cap V\ne\emptyset$ and hence $f_{1,\infty}$ is topologically transitive. Note that $N_{f_{1,\infty}}(U, V) = \{2k-1, 2k+1, 2k+3, \ldots\}$, so $f_{1,\infty}$ cannot be topologically mixing. Similarly, using sensitivity of $\sigma$, it can be shown that $f_{1,\infty}$ is sensitive. Also, $f_1^{2m}(x) = x$, for each $m\in\mathbb{N}$ and any $x\in \Sigma_2$, implying that every point of $\Sigma_2$ is periodic.  Thus, $(\Sigma_2, f_{1,\infty})$ is Devaney chaotic but it is not topologically mixing and hence cannot have  any kind  of specification properties.\end{exm}
\section{Specification Properties of Induced Systems}
In this section, we study the specification properties of systems induced on hyperspaces and probability measures spaces. It is also proved that if the non-autonomous system has SSP, then the corresponding system induced on the hyperspaces also has SSP and this result holds both ways for WSP (QSP). It is proved that if the non-autonomous system has SSP, then the corresponding system induced on the probability measurable spaces also has SSP and this result holds both ways for QSP.
\begin{thm}\label{T1}If the non-autonomous system $(X, f_{1,\infty})$ has SSP, then ($\mathcal{K}(X), \overline{f}_{1,\infty})$ also has SSP. 
\end{thm}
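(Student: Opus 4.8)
The plan is to verify the SSP directly in $(\mathcal{K}(X), \overline{f}_{1,\infty})$ with respect to the Hausdorff metric $\mathcal{H}$, reducing every estimate to the SSP of the base system through the identity $\overline{f}_1^{\,j} = \overline{f_1^{\,j}}$, which gives $\overline{f}_1^{\,j}(A) = f_1^{\,j}(A)$ for each $A \in \mathcal{K}(X)$. Given $\epsilon>0$, I would let $M(\epsilon/3)$ be the SSP constant of $(X,f_{1,\infty})$ at level $\epsilon/3$ and set the hyperspace constant $\overline{M}(\epsilon) := M(\epsilon/3)$. Then I fix compact sets $A_1,\dots,A_k \in \mathcal{K}(X)$, a sequence $a_1\le b_1<\dots<a_k\le b_k$ with $a_j-b_{j-1}>\overline{M}(\epsilon)$, and $p>\overline{M}(\epsilon)+b_k-a_1$; the goal is to produce a periodic $\mathcal{A}\in\mathcal{K}(X)$ of period $p$ that shadows each $A_i$ over its window $[a_i,b_i]$.

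First I would discretize each target set. For each $i$ I introduce the continuous pseudo-metric $\rho_i(x,y)=\max_{a_i\le j\le b_i} d(f_1^{\,j}(x),f_1^{\,j}(y))$ on the compact set $A_i$ and extract a finite $\rho_i$-$(\epsilon/3)$-net $E_i=\{x_{i,1},\dots,x_{i,l_i}\}\subseteq A_i$; thus every point of $A_i$ is, \emph{simultaneously} at every time $j\in[a_i,b_i]$, within $\epsilon/3$ of the corresponding iterate of some net point. Working with $\rho_i$ rather than with plain density of finite sets together with uniform continuity of $f_1^{\,j}$ is what lets me control the approximation at all relevant times at once, and it sidesteps the degradation of uniform-continuity moduli under long compositions $f_1^{\,j}$.

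Next I would assemble one periodic point per choice of net points. The structural issue is that SSP shadows only one orbit per time window, so the whole net $E_i$ cannot be assigned to the window $[a_i,b_i]$ simultaneously. Instead, for each tuple $\mathbf{m}=(m_1,\dots,m_k)\in\prod_i\{1,\dots,l_i\}$ I feed the $k$ points $x_{1,m_1},\dots,x_{k,m_k}$ with the $k$ windows $[a_1,b_1],\dots,[a_k,b_k]$ into the base SSP; the gap and period hypotheses transfer verbatim since $\overline{M}(\epsilon)=M(\epsilon/3)$, yielding a periodic point $z_{\mathbf{m}}$ of period $p$ with $d(f_1^{\,j}(z_{\mathbf{m}}),f_1^{\,j}(x_{i,m_i}))<\epsilon/3$ for $j\in[a_i,b_i]$ and all $i$. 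I then set $\mathcal{A}:=\{z_{\mathbf{m}}:\mathbf{m}\in\prod_i\{1,\dots,l_i\}\}$, a nonempty finite set, hence an element of $\mathcal{K}(X)$; since every $z_{\mathbf{m}}$ has the common period $p$, one gets $\overline{f}_1^{\,pm}(\mathcal{A})=f_1^{\,pm}(\mathcal{A})=\mathcal{A}$ for all $m$, so $\mathcal{A}$ is a periodic point of period $p$ in the hyperspace. Finally I estimate $\mathcal{H}$ for fixed $i$ and $j\in[a_i,b_i]$: the inclusion $f_1^{\,j}(\mathcal{A})\subseteq N(f_1^{\,j}(A_i),\epsilon)$ holds because each $f_1^{\,j}(z_{\mathbf{m}})$ is within $\epsilon/3$ of $f_1^{\,j}(x_{i,m_i})\in f_1^{\,j}(A_i)$; for the reverse inclusion, any $a\in A_i$ is $\rho_i$-$(\epsilon/3)$-close to some $x_{i,m}$, and choosing a tuple $\mathbf{m}$ with $i$-th coordinate $m$ the triangle inequality gives $d(f_1^{\,j}(a),f_1^{\,j}(z_{\mathbf{m}}))<2\epsilon/3$. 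Both inclusions hold with radius $2\epsilon/3$, so $\mathcal{H}(\overline{f}_1^{\,j}(\mathcal{A}),\overline{f}_1^{\,j}(A_i))\le 2\epsilon/3<\epsilon$ throughout, which is the SSP for $(\mathcal{K}(X),\overline{f}_{1,\infty})$.

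The main obstacle, and the only genuinely non-formal point, is reconciling the ``one orbit per window'' shape of SSP with the need to $\epsilon$-fill an entire compact set within each window. The product-of-nets device — indexing the periodic points by tuples $\mathbf{m}$ and taking their union, all with the \emph{same} period $p$ — is precisely what resolves this, simultaneously guaranteeing the two-sided Hausdorff covering and the periodicity of $\mathcal{A}$; the $\epsilon/3$ budget (net plus shadowing) is what makes the final bound strict.
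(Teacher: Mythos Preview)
Your argument is correct, and it differs from the paper's in two substantive respects. First, where you build finite $\rho_i$-nets \emph{inside} each $A_i$ so that the approximation is automatically preserved along every time $j\in[a_i,b_i]$, the paper instead invokes density of $\mathcal{F}(X)$ in $\mathcal{K}(X)$ to pick finite sets $B_i$ with $\mathcal{H}(A_i,B_i)<\delta$, and then uses uniform continuity of the finitely many maps $\overline{f}_1^{\,j}$ (over the compact $\mathcal{K}(X)$) to push this $\delta$-closeness forward to $\epsilon/2$-closeness at the relevant times; your route sidesteps this continuity step entirely. Second, the paper pads all the $B_l$ to a common size $n$, writes $B_l=\{x_i^l\}_{i=1}^n$, and applies the base SSP once per index $i$ to the $k$-tuple $(x_i^1,\dots,x_i^k)$, obtaining only $n$ periodic points $z_i$ and setting $C=\{z_i\}_{i=1}^n$; you instead apply base SSP once per \emph{tuple} $\mathbf{m}\in\prod_i\{1,\dots,l_i\}$, which is more points but needs no padding. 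Both schemes yield the two-sided Hausdorff inclusion. One place where your write-up is actually tighter: you feed the \emph{given} period $p$ into every application of base SSP, so all $z_{\mathbf m}$ share the same period $p$ and $\mathcal{A}$ is genuinely $p$-periodic; the paper, as written, lets each $z_i$ have its own period $p_i$ and then passes to the LCM, silently overwriting the original $p$ --- a fixable but real slip that your formulation avoids.
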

\begin{proof}
Let $\epsilon>0$ be arbitrary and $M(\epsilon/2)$ be the positive integer as in the definition of SSP. Let $A_1$, $A_2$, \ldots, $A_k\in\mathcal{K}(X)$ and $a_1\leq b_1< a_2 \leq b_2< \cdots <a_k \leq b_k$, be a sequence of any non-negative integers with $a_j - b_{j-1}> M(\epsilon/2)$, $2\leq j\leq k$ and any $p> M(\epsilon/2) + b_k - a_1$. We have $\overline{f}_1^j$ is continuous on a compact metric space, for every $j\geq 0$, so for every $\epsilon>0$, there exists $\delta>0$ such that 
\begin{equation}\label{H1}
\mathcal{H}(A, B) <\delta \ \implies \ \mathcal{H}(\overline{f}_1^j(A), \overline{f}_1^j(B))<\epsilon/2, \ \text{for every} \ A, B\in\mathcal{K}(X).
\end{equation} 
Now, $\mathcal{F}(X)$ is dense in $\mathcal{K}(X)$, therefore there exist $B_1$, $B_2$, \ldots, $B_k\in \mathcal{F}(X)$ such that $\mathcal{H}(A_i, B_i)< \delta$ and hence by (\ref{H1}), we get that 
\begin{equation}\label{H2}
\mathcal{H}(\overline{f}_1^j(A_i), \overline{f}_1^j(B_i))<\epsilon/2, \ \text{for} \ i = 1, 2, \ldots, k.
\end{equation}
Let $B_l = \{x_i^l\}_{i=1}^n$, $l = 1, 2, \ldots, k$. By SSP of $(X, f_{1,\infty})$, there exist $z_i\in X$ for each $x_i^l$ such that $d(f_1^j(z_i), f_1^j(x_i^l))< \epsilon/2$, for each $a_l\leq j\leq b_l$, $l = 1, 2, \ldots, k$ and $f_1^{m p_i}(z_i) = z_i$, for each $m\in\mathbb{N}$ and $i = 1, 2, \ldots, n$. Taking $p$ to be the least common multiple of $p_1$, $p_2$, \ldots, $p_n$, we get $f_1^{mp}(z_i) = z_i$. Let $C = \{z_i\}_{i=1}^n$, then
\begin{align}\label{H3} \mathcal{H}(\overline{f}_1^j(C), \overline{f}_1^j(B_l)) = d(f_1^j(z_i), f_1^j(x_i^l))< \epsilon/2, \ a_l\leq j\leq b_l, \ l=1, 2, \ldots, k.
\end{align} Thus, using (\ref{H2}), (\ref{H3}) and triangle inequality, we get that $\mathcal{H}(\overline{f}_1^j(C), \overline{f}_1^j(A_l))<\epsilon$, for each $a_l\leq j\leq b_l, \ l=1, 2, \ldots, k$  and $\overline{f}_1^{mp}(C) = C$, for each $m\in\mathbb{N}$. Therefore, $(\mathcal{K}(X), \overline{f}_{1,\infty})$ has SSP.
\end{proof}
For WSP, we have the following result.
\begin{thm}\label{5.1}A non-autonomous system $(X, f_{1,\infty})$ has WSP if and only if $(\mathcal{K}(X), \overline{f}_{1,\infty})$ has WSP.
\end{thm}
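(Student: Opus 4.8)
The plan is to prove the two implications separately. The forward direction ($f_{1,\infty}$ has WSP $\implies$ $\overline{f}_{1,\infty}$ has WSP) can be handled by exactly the strategy of Theorem \ref{T1}, simply dropping every reference to periodicity. That is, given $\epsilon > 0$, I would invoke WSP for $f_{1,\infty}$ with constant $N$ attached to $\epsilon/2$, use the uniform continuity of each $\overline{f}_1^j$ on the compact space $\mathcal{K}(X)$ to obtain $\delta$ as in (\ref{H1}), and use density of $\mathcal{F}(X)$ in $\mathcal{K}(X)$ to replace the given compact sets $A_1,\dots,A_s$ by finite sets $B_l = \{x_i^l\}_{i=1}^{n}$ with $\mathcal{H}(A_i,B_i) < \delta$. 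Applying WSP of $f_{1,\infty}$ to the finitely many points $x_i^l$ across all the blocks yields a single tracing point $z_i$ for each coordinate index $i$; assembling $C = \{z_i\}_{i=1}^n$ and using the computation $\mathcal{H}(\overline{f}_1^j(C),\overline{f}_1^j(B_l)) = \max_i d(f_1^j(z_i), f_1^j(x_i^l)) < \epsilon/2$ together with the triangle inequality gives $\mathcal{H}(\overline{f}_1^j(C), \overline{f}_1^j(A_l)) < \epsilon$ on each block. The constant $N$ for $\overline{f}_{1,\infty}$ is taken to be $N(\epsilon/2)$. The one point requiring care here is that WSP allows an arbitrary number $s$ of blocks and arbitrary points, so I must apply WSP of $f_{1,\infty}$ with all the $x_i^l$ listed simultaneously and then restrict; since no periodicity is demanded, no least-common-multiple step is needed.

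For the converse ($\overline{f}_{1,\infty}$ has WSP $\implies$ $f_{1,\infty}$ has WSP), I would exploit the natural isometric embedding of $X$ into $\mathcal{K}(X)$ sending each point $x$ to the singleton $\{x\}$, which is a homeomorphism onto its (closed) image and satisfies $\overline{f}_1^j(\{x\}) = \{f_1^j(x)\}$ as well as $\mathcal{H}(\{x\},\{y\}) = d(x,y)$. Given $\epsilon>0$ and points $x_1,\dots,x_s \in X$ with an admissible sequence $a_1 \le b_1 < \cdots < a_s \le b_s$ satisfying the gap condition for the WSP constant of $\overline{f}_{1,\infty}$, I would feed the singletons $\{x_1\},\dots,\{x_s\} \in \mathcal{K}(X)$ into WSP of $\overline{f}_{1,\infty}$, obtaining a compact set $K \in \mathcal{K}(X)$ with $\mathcal{H}(\overline{f}_1^j(K), \{f_1^j(x_i)\}) < \epsilon$ on each block. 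The subtlety is that $K$ need not be a singleton, so I cannot directly read off a tracing point in $X$. The remedy is that the inequality $\mathcal{H}(\overline{f}_1^j(K),\{f_1^j(x_i)\}) < \epsilon$ forces $f_1^j(K) \subseteq B_d(f_1^j(x_i), \epsilon)$ for every $a_i \le j \le b_i$; choosing any single point $z \in K$ then gives $d(f_1^j(z), f_1^j(x_i)) < \epsilon$ on every block simultaneously, so that $z$ is the required tracing point and the WSP constant for $f_{1,\infty}$ can be taken equal to that of $\overline{f}_{1,\infty}$.

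The main obstacle is precisely this last extraction step in the converse: one must notice that the Hausdorff distance bound from a compact set to a singleton controls every point of the compact set uniformly, which is exactly what lets one pass from the set $K$ back down to an individual orbit in $X$ valid across all blocks at once. Everything else is a routine transfer through the singleton embedding and the estimate $\mathcal{H}(\overline{f}_1^j(C),\overline{f}_1^j(B_l)) = \max_i d(f_1^j(z_i),f_1^j(x_i^l))$ already used in Theorem \ref{T1}. I would remark that the argument makes no use of periodicity and hence, as the statement asserts, gives a genuine equivalence for WSP (and, with the QSP tracing condition substituted for the WSP one, for QSP as well).
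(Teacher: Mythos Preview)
Your proposal is correct and follows essentially the same approach as the paper: the forward direction adapts the argument of Theorem~\ref{T1} with the periodicity requirement removed, and the converse feeds singletons $\{x_i\}$ into the hyperspace WSP and extracts a tracing point from the resulting compact set $K$. Your extraction step is in fact cleaner than the paper's, since you observe directly that $\mathcal{H}(\overline{f}_1^j(K),\{f_1^j(x_i)\})<\epsilon$ forces $f_1^j(K)\subseteq B_d(f_1^j(x_i),\epsilon)$, so \emph{any} $z\in K$ works simultaneously across all blocks; the paper argues this by a contradiction whose negation is not quite stated correctly.
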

\begin{proof}
Suppose that $(\mathcal{K}(X), \overline{f}_{1,\infty})$ has WSP. Let $\epsilon>0$ be arbitrary and $M(\epsilon)$ be the positive integer as in the definition of WSP. Let $x_1$, $x_2$, \ldots, $x_k\in X$ and $a_1\leq b_1< a_2 \leq b_2< \cdots <a_k \leq b_k$, be a sequence of any non-negative integers with $a_j - b_{j-1}> M(\epsilon)$, $2\leq j\leq k$. Since $(\mathcal{K}(X), \overline{f}_{1,\infty})$ has WSP, therefore for any $A_1$, $A_2$, \ldots, $A_k\in\mathcal{K}(X)$, there exists $B\in\mathcal{K}(X)$ such that \begin{align}\label{HC}
\mathcal{H}(\overline{f}_1^j(B), \overline{f}_1^j(A_l)) < \epsilon, \ a_l\leq j\leq b_l, \ l=1, 2, \ldots, k.
\end{align}
Taking $A_i =\{x_i\}$, for each $i= 1, 2, \ldots, k$, we get $\mathcal{H}(\overline{f}_1^j(B), \overline{f}_1^j(\{x_l\})) < \epsilon, \ a_l\leq j\leq b_l, \ l=1, 2, \ldots, k$, where $\overline{f}_1^j(\{x_l\}) = \{f_1^j(x_l)\}$.

Assume if possible that $d(f_1^j(b), f_1^j(x_l))\geq\epsilon$, for each $b\in B$ and each $a_l\leq j\leq b_l, \ l=1, 2, \ldots, k$. Therefore, we get $f_1^j(B)\not\subset B_{\mathcal{H}}(\overline{f}_1^j(x_l), \epsilon)$, for each $j$, that is, $f_1^j(B)\not\subset N(\{f_1^j(x_l)\}, \epsilon)$ and $f_1^j(x_l)\notin N(f_1^j(B), \epsilon)$, which implies that $\mathcal{H}(\overline{f}_1^j(B), \overline{f}_1^j(\{x_l\})) \geq \epsilon, \ a_l\leq j\leq b_l, \ l=1, 2, \ldots, k, \ i= 1, 2, \ldots, k$, which is a contradiction to (\ref{HC}). Thus, there exist $b\in B\subseteq X$ such that $d(f_1^j(b), f_1^j(x_l))<\epsilon$, for all $a_l\leq j\leq b_l, \ l=1, 2, \ldots, k$ implying that $(X, f_{1,\infty})$ has WSP. Converse is a direct consequence of Theorem \ref{T1}.
\end{proof}
\begin{cor}\label{5.2} The non-autonomous system $(\mathcal{K}(X\times Y), \overline{f}_{1,\infty}\times \overline{g}_{1,\infty})$ has WSP if and only if $(\mathcal{K}(X), \overline{f}_{1,\infty})$ and $(\mathcal{K}(Y), \overline{g}_{1,\infty})$ has WSP.
\end{cor}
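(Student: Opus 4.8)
The plan is to obtain the result purely by chaining together the product theorem (Theorem \ref{KK}, extended to WSP by Remark \ref{RR}) with the hyperspace characterization of WSP (Theorem \ref{5.1}). First I would fix the meaning of the notation: since the phase space here is $\mathcal{K}(X\times Y)$ rather than $\mathcal{K}(X)\times\mathcal{K}(Y)$, the symbol $\overline{f}_{1,\infty}\times\overline{g}_{1,\infty}$ must be read as the family induced on $\mathcal{K}(X\times Y)$ by the product family $h_{1,\infty}:=f_{1,\infty}\times g_{1,\infty}$ acting on $X\times Y$; that is, $\overline{f}_{1,\infty}\times\overline{g}_{1,\infty}=\overline{h}_{1,\infty}$. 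Observe that $X\times Y$ with the product metric $\tilde d$ is again a compact metric space and each $h_n=f_n\times g_n$ is again surjective, so $(X\times Y,h_{1,\infty})$ is an admissible non-autonomous system and every theorem proved above applies to it verbatim.

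Next I would run the following three biconditionals. Applying Theorem \ref{5.1} to the single system $(X\times Y,h_{1,\infty})$ gives that $(\mathcal{K}(X\times Y),\overline{h}_{1,\infty})$ has WSP if and only if $(X\times Y,h_{1,\infty})$ has WSP. By Theorem \ref{KK} together with Remark \ref{RR}(2), the product $(X\times Y,f_{1,\infty}\times g_{1,\infty})$ has WSP if and only if both factors $(X,f_{1,\infty})$ and $(Y,g_{1,\infty})$ have WSP. Finally, applying Theorem \ref{5.1} to each factor separately, $(X,f_{1,\infty})$ has WSP if and only if $(\mathcal{K}(X),\overline{f}_{1,\infty})$ has WSP, and likewise $(Y,g_{1,\infty})$ has WSP if and only if $(\mathcal{K}(Y),\overline{g}_{1,\infty})$ has WSP. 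Concatenating these equivalences yields precisely the claimed statement.

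The only point genuinely requiring care — and hence the main potential obstacle — is the notational identification $\overline{f}_{1,\infty}\times\overline{g}_{1,\infty}=\overline{h}_{1,\infty}$, namely that the induced system on $\mathcal{K}(X\times Y)$ is what the symbol is meant to denote. One must resist reading it as a product system living on $\mathcal{K}(X)\times\mathcal{K}(Y)$, since $\mathcal{K}(X\times Y)$ and $\mathcal{K}(X)\times\mathcal{K}(Y)$ are genuinely distinct spaces and no product-of-hyperspaces statement is being asserted. Once this reading is pinned down, the argument is a one-line composition of Theorems \ref{5.1} and \ref{KK}; everything else (compactness and surjectivity of the product family, and the fact that Theorem \ref{5.1} applies to the product system as a single non-autonomous system) is routine and needs no separate verification.
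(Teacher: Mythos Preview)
Your proposal is correct and matches the paper's intended argument: the corollary is stated without proof immediately after Theorem \ref{5.1}, and is meant to follow exactly by combining that theorem with the WSP version of Theorem \ref{KK} (via Remark \ref{RR}), precisely as you chain them. Your clarification of the notation $\overline{f}_{1,\infty}\times\overline{g}_{1,\infty}=\overline{h}_{1,\infty}$ on $\mathcal{K}(X\times Y)$ is a helpful addition that the paper leaves implicit.
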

\begin{rmk}\label{r5.1} By similar arguments, Theorem \ref{5.1} and Corollary \ref{5.2} are also true for the non-autonomous systems having QSP.
\end{rmk} 
\begin{thm}\label{PP}If the non-autonomous system $(X, f_{1,\infty})$ has SSP, then $(\mathcal{M}(X), \widetilde{f}_{1,\infty})$ also has SSP.
\end{thm}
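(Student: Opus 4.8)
The plan is to mirror the hyperspace argument of Theorem \ref{T1}, replacing finite sets by equally-weighted finitely-supported measures and the Hausdorff metric by the Prohorov metric $\mathcal{D}$. Fix $\epsilon>0$ and let $M(\epsilon/2)$ be the SSP constant of the base system; I would declare the specification constant for $(\mathcal{M}(X),\widetilde{f}_{1,\infty})$ to be exactly $M(\epsilon/2)$, which is fixed \emph{before} any data are revealed. Given a block sequence $a_1\le b_1<\cdots<a_k\le b_k$ with gaps exceeding $M(\epsilon/2)$, a period $p>M(\epsilon/2)+b_k-a_1$, and target measures $\mu_1,\ldots,\mu_k\in\mathcal{M}(X)$, the goal is to construct a single $p$-periodic measure $\nu$ tracking each $\mu_l$ on its block $[a_l,b_l]$ to within $\epsilon$.

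First I would reduce to empirical measures. Since $X$ is compact, $\mathcal{M}(X)$ is a compact metric space and each pushforward $\widetilde{f}_1^j$ is continuous, hence uniformly continuous; because the index set $J=\bigcup_l [a_l,b_l]\cap\mathbb{Z}$ is finite, taking a minimum over $j\in J$ yields a single $\delta>0$ with $\mathcal{D}(\mu,\nu)<\delta\Rightarrow \mathcal{D}(\widetilde{f}_1^j\mu,\widetilde{f}_1^j\nu)<\epsilon/2$ for all $j\in J$. Using density of $\mathcal{M}_\infty(X)$ I pick $\nu_l\in\mathcal{M}_\infty(X)$ with $\mathcal{D}(\mu_l,\nu_l)<\delta$, so that $\mathcal{D}(\widetilde{f}_1^j\mu_l,\widetilde{f}_1^j\nu_l)<\epsilon/2$ on $J$. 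Passing to a common denominator $n$ (a common multiple of the individual denominators, repeating atoms, using $\mathcal{M}_{n_l}(X)\subseteq\mathcal{M}_n(X)$), I write each $\nu_l=\frac{1}{n}\sum_{i=1}^n\delta_{x_i^l}$ with the same $n$.

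Next I would invoke SSP of $(X,f_{1,\infty})$ once for each particle index $i\in\{1,\ldots,n\}$: applied to the points $x_i^1,\ldots,x_i^k$ along the given blocks with the same prescribed period $p$, it produces a $p$-periodic point $z_i$ with $d(f_1^j(z_i),f_1^j(x_i^l))<\epsilon/2$ for $a_l\le j\le b_l$. That SSP delivers an orbit of \emph{any} prescribed period $p>M(\epsilon/2)+b_k-a_1$ is precisely what lets me use one common $p$ across all $i$, so that $\nu:=\frac{1}{n}\sum_{i=1}^n\delta_{z_i}$ satisfies $\widetilde{f}_1^{pm}(\nu)=\nu$ for every $m$. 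Since $\widetilde{f}_1^j\!\left(\frac{1}{n}\sum_i\delta_{z_i}\right)=\frac{1}{n}\sum_i\delta_{f_1^j(z_i)}$ and likewise for $\nu_l$, a triangle inequality $\mathcal{D}(\widetilde{f}_1^j\nu,\widetilde{f}_1^j\mu_l)\le\mathcal{D}(\widetilde{f}_1^j\nu,\widetilde{f}_1^j\nu_l)+\mathcal{D}(\widetilde{f}_1^j\nu_l,\widetilde{f}_1^j\mu_l)<\epsilon$ on each block finishes the argument.

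I expect the one genuinely non-routine step to be the Prohorov estimate for equally-weighted empirical measures under a coordinatewise matching, namely that if $\frac{1}{n}\sum_i\delta_{u_i}$ and $\frac{1}{n}\sum_i\delta_{v_i}$ are matched so that $d(u_i,v_i)<r$ for every $i$, then $\mathcal{D}\le r$; applied here with $u_i=f_1^j(z_i)$, $v_i=f_1^j(x_i^l)$ and $r<\epsilon/2$ it gives $\mathcal{D}(\widetilde{f}_1^j\nu,\widetilde{f}_1^j\nu_l)<\epsilon/2$. I would prove it directly from the definition of $\mathcal{D}$: for any Borel set $A$, every index with $u_i\in A$ has $v_i\in N(A,r)$, whence $\bigl(\frac{1}{n}\sum_i\delta_{u_i}\bigr)(A)\le\bigl(\frac{1}{n}\sum_i\delta_{v_i}\bigr)(N(A,r))\le\bigl(\frac{1}{n}\sum_i\delta_{v_i}\bigr)(N(A,r))+r$, and symmetrically. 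Everything else is the same bookkeeping as in Theorem \ref{T1}, the two key conveniences being that pushforward commutes with finite convex combinations of Dirac masses and that SSP permits a common period.
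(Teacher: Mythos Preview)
Your proposal is correct and follows essentially the same route as the paper: approximate the targets by empirical measures in some $\mathcal{M}_n(X)$ via density, apply the base SSP particle-by-particle with the prescribed period $p$, assemble the resulting periodic points into $\rho=\frac{1}{n}\sum_i\delta_{z_i}$, and finish with the triangle inequality. You are in fact more careful than the paper on two points it glosses over---the common-denominator step and the Prohorov matching lemma $\mathcal{D}\bigl(\tfrac{1}{n}\sum\delta_{u_i},\tfrac{1}{n}\sum\delta_{v_i}\bigr)\le\max_i d(u_i,v_i)$---so nothing is missing.
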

\begin{proof}
Let $\epsilon>0$ be arbitrary and $M(\epsilon/2)$ be the positive integer as in the definition of SSP. Let $\mu_1$, $\mu_2$, \ldots, $\mu_k\in \mathcal{M}(X)$ be given and $a_1\leq b_1< a_2 \leq b_2< \cdots <a_k \leq b_k$, be a sequence of any non-negative integers with $a_j - b_{j-1}> M(\epsilon/2)$, $2\leq j\leq k$ and any $p> M(\epsilon/2) + b_k - a_1$. Now, each $\widetilde{f}_{i}$ is  continuous from $\mathcal{M}(X)$ to itself and $\mathcal{M}(X)$ is compact, therefore for given $\epsilon>0$, there exists $\eta>0$ such that 
\begin{equation}\label{M1}
\mathcal{D}(\mu, \nu) <\eta \ \implies \ \mathcal{D}(\widetilde{f}_{1}^j(\mu), \widetilde{f}_{1}^j(\nu))<\epsilon/2, \ \text{for every}, \ \mu, \nu\in \mathcal{M}(X) \ \text{and} \ a_l\leq j\leq b_l,
\end{equation} where $1\leq l \leq k$. 
Now, $\mu_i\in \mathcal{M}(X)$ and $\mathcal{M}_\infty(X)$ is dense in $\mathcal{M}(X)$, therefore there exist $\nu_1$, $\nu_2$, \ldots, $\nu_k\in \mathcal{M}_n(X)$ such that $\mathcal{D}(\mu_i, \nu_i)< \eta$ and hence by (\ref{M1}), we get
\begin{equation}\label{M2}
\mathcal{D}(\widetilde{f}_{1}^j(\mu_i), \widetilde{f}_{1}^j(\nu_i))<\epsilon/2, \ \text{for} \ i = 1, 2, \ldots, k.
\end{equation}
Let $\nu_i = (\sum_{l=1}^n\delta_{x_l^i})/n$, for $i =1, 2, \ldots, k$. Since $f_{1,\infty}$ satisfies SSP, therefore there exist $z_l\in X$ such that $f_1^{pm}(z_l) = z_l$, for every $m\in\mathbb{N}$ and $d(f_1^j(z_l), f_1^j(x_l^i)) < \epsilon/2$, for each $a_i\leq j\leq b_i$, $i = 1, 2, \ldots, k$ and $l= 1, 2, \ldots, n$. Let $\rho = (\sum_{l=1}^n\delta_{z_l})/n$ and $A$ be any Borel measurable set, then 
\begin{align*}
\widetilde{f}_{1}^{pm}(\rho)(A)  =  \rho(f_1^{-pm}(A)) & = \frac{1}{n}(\delta_{z_1} + \cdots + \delta_{z_n})(f_1^{-pm}(A)) \\ & = \frac{1}{n}(\delta_{z_1}(f_1^{-pm}(A)) + \cdots + \delta_{z_n}(f_1^{-pm}(A)))\\ & = \frac{1}{n} \sum_{l=1}^n\delta_{z_l}(A),
\end{align*}
because $\delta_{z_l}(f_1^{-pm}(A)) = \delta_{z_l}(A)$ using $f_1^{pm}(z_l) = z_l$ and hence $\widetilde{f}_{1}^{pm}(\rho)(A) = \rho(A)$, for any $A\in\mathcal{B}(X)$. Thus,  $\widetilde{f}_{1}^{pm}(\rho) = \rho$, for every $m\in\mathbb{N}$ and we have
\begin{align}\label{M3}\mathcal{D}(\widetilde{f}_{1}^j(\rho), \widetilde{f}_{1}^j(\nu_i))< \epsilon/2, \ a_i\leq j\leq b_i, \ i=1, 2, \ldots, k.
\end{align} Therefore, using (\ref{M2}), (\ref{M3}) and triangle inequality, we get that $\mathcal{D}(\widetilde{f}_{1}^j(\rho), \widetilde{f}_{1}^j(\mu_i))<\epsilon$, for each $a_i\leq j\leq b_i, \ i=1, 2, \ldots, k$ and hence $(\mathcal{M}(X), \widetilde{f}_{1,\infty})$ has SSP.
\end{proof}
\begin{rmk}By similar arguments, Theorem \ref{PP} also holds for weak specification property.
\end{rmk}
Next, we show that for the systems having QSP above result hold both ways.
\begin{thm} A non-autonomous system $(X, f_{1,\infty})$ is topological mixing if and only if $(\mathcal{M}(X), \widetilde{f}_{1,\infty})$ is topological mixing. 
\end{thm}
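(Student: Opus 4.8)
The plan is to prove both implications directly from the definition of topological mixing, using the Dirac embedding $x \mapsto \delta_x$ as the bridge between the two systems. The crucial elementary fact, following from $\widetilde{f}_1^n(\mu)(A) = \mu(f_1^{-n}(A))$, is that $\widetilde{f}_1^n(\delta_x) = \delta_{f_1^n(x)}$ and, more generally, that $\widetilde{f}_1^n$ acts as pushforward, so $\widetilde{f}_1^n\big(\tfrac{1}{N}\sum_{i=1}^N \delta_{z_i}\big) = \tfrac{1}{N}\sum_{i=1}^N \delta_{f_1^n(z_i)}$. I will also use the standard estimate that if $d(z_i, x_i) < \epsilon$ for all $i$, then $\mathcal{D}\big(\tfrac{1}{N}\sum_{i=1}^N \delta_{z_i}, \tfrac{1}{N}\sum_{i=1}^N \delta_{x_i}\big) \le \epsilon$, which follows at once from the definition of the Prohorov metric $\mathcal{D}$ by checking its two defining inequalities on an arbitrary Borel set $A$.

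For the easy direction, suppose $(\mathcal{M}(X), \widetilde{f}_{1,\infty})$ is topologically mixing and let $U, V \subseteq X$ be non-empty open. Since $\mu \mapsto \mu(U)$ is lower semicontinuous for open $U$, the sets $\mathcal{U} = \{\mu : \mu(U) > 1/2\}$ and $\mathcal{V} = \{\mu : \mu(V) > 1/2\}$ are open and non-empty (they contain $\delta_x$ for $x \in U$, respectively $x \in V$). Mixing of the induced system gives $n$ with $\widetilde{f}_1^m(\mathcal{U}) \cap \mathcal{V} \ne \emptyset$ for all $m \ge n$; thus for each such $m$ there is $\mu \in \mathcal{U}$ with $\widetilde{f}_1^m(\mu) \in \mathcal{V}$, i.e. $\mu(U) > 1/2$ and $\mu(f_1^{-m}(V)) = \widetilde{f}_1^m(\mu)(V) > 1/2$. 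Then $\mu(U) + \mu(f_1^{-m}(V)) > 1$ forces $\mu\big(U \cap f_1^{-m}(V)\big) > 0$, so $f_1^m(U) \cap V \ne \emptyset$. As $m \ge n$ was arbitrary, $(X, f_{1,\infty})$ is topologically mixing.

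For the converse, which is the substantive direction, assume $(X, f_{1,\infty})$ is topologically mixing and fix non-empty open $\mathcal{U}, \mathcal{V} \subseteq \mathcal{M}(X)$. Using density of $\mathcal{M}_\infty(X)$ and clearing denominators to a common value $N$, I would choose $\mu = \tfrac{1}{N}\sum_{i=1}^N \delta_{x_i} \in \mathcal{U}$ and $\nu = \tfrac{1}{N}\sum_{i=1}^N \delta_{y_i} \in \mathcal{V}$, and fix $\epsilon > 0$ so that the $\epsilon$-Prohorov balls around $\mu$ and $\nu$ lie in $\mathcal{U}$ and $\mathcal{V}$. Applying mixing of $X$ to each of the finitely many pairs $B_d(x_i, \epsilon)$, $B_d(y_i, \epsilon)$ produces integers $n_i$; set $n = \max_i n_i$. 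Then for every $m \ge n$ and each $i$ I can select $z_i \in B_d(x_i, \epsilon)$ with $f_1^m(z_i) \in B_d(y_i, \epsilon)$, and form $\tau = \tfrac{1}{N}\sum_{i=1}^N \delta_{z_i}$. The Prohorov estimate gives $\mathcal{D}(\tau,\mu) \le \epsilon$ so $\tau \in \mathcal{U}$, while $\widetilde{f}_1^m(\tau) = \tfrac{1}{N}\sum_{i=1}^N \delta_{f_1^m(z_i)}$ lies within $\epsilon$ of $\nu$, hence in $\mathcal{V}$. Thus $\widetilde{f}_1^m(\mathcal{U}) \cap \mathcal{V} \ne \emptyset$ for all $m \ge n$, establishing mixing of the induced system.

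The main obstacle lies entirely in this converse direction: the reduction to equal-weight empirical measures with a common denominator, and the careful bookkeeping of the Prohorov estimates needed to guarantee that $\tau$ and $\widetilde{f}_1^m(\tau)$ land in the prescribed balls simultaneously for all large $m$. I would take care to use strict radii (approximating within $\epsilon/2$, say) so that the constructed measures fall strictly inside the open balls, and I would note that only finitely many support points are involved, which is precisely what makes $n = \max_i n_i$ legitimate. Everything else is routine once the Dirac embedding and the pushforward formula for $\widetilde{f}_1^m$ on finitely supported measures are in place.
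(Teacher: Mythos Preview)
Your proposal is correct and follows essentially the same route as the paper: for $\mathcal{M}(X)\Rightarrow X$ the paper also uses the threshold sets $\{\mu:\mu(U)>c\}$ (with $c=4/5$ rather than your $1/2$) and the same measure-overlap argument, while for $X\Rightarrow\mathcal{M}(X)$ the paper likewise approximates by equal-weight empirical measures in $\mathcal{M}_\infty(X)$ and applies mixing to the finitely many coordinate pairs. The only cosmetic differences are that you phrase the neighborhood control via explicit Prohorov estimates and take $n=\max_i n_i$ directly, whereas the paper invokes continuity of the Dirac-sum map and mixing of the $m$-fold product; these are equivalent formulations.
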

\begin{proof}
First suppose that $(X, f_{1,\infty})$ is topological mixing. Let $W_1$ and $W_2$ be any two non-empty open subsets of $\mathcal{M}(X)$. Since $\mathcal{M}_\infty(X)$ is dense in $\mathcal{M}(X)$, therefore there exists $\mu_j\in\mathcal{M}_\infty(X)$ such that $\mu_j = (\sum_{i=1}^m\delta_{x_i^j})/m\in W_j$, for each $j\in\{1, 2\}$. We can choose open neighborhood $U_i^j$ of $x_i^j$ in such a manner if $y_i^j\in U_i^j$, then $(\sum_{i=1}^m\delta_{x_i^j})/m\in W_j$, for each $j\in\{1, 2\}$. It is easy to see that if $f_{1,\infty}$ is topological mixing, then $\underbrace{f_{1,\infty} \times f_{1,\infty} \times \cdots \times f_{1,\infty}}_\text{$m$-times}$ is  also topological mixing. Therefore, there exists $k\in\mathbb{N}$ such that $f_1^n(U_i^1)\cap U_i^2\ne\emptyset$, for all $n\geq k$ and for every $i\in \{1, 2, \ldots, m\}$. Let  $z_i^2\in f_1^n(U_i^1)$ and $z_i^2\in U_i^2$, that is, $f_1^{-n}(z_i^2)\in U_i^1$ and $z_i^2\in U_i^2$, for all $n\geq k$ and for every $i\in\{1, 2, \ldots, m\}$ and hence  we get that $\nu = (\sum_{i=1}^m\delta_{z_i^2})/m\in W_2$ and $\widetilde{f}_1^{-n}(\nu) = (\sum_{i=1}^r\delta_{f_1^{-n}(z_i^2)})/m\in W_1$. Thus, $\widetilde{f}_1^n(W_1)\cap W_2\ne\emptyset$, for all $n\geq k$ and for every $j\in\{1, 2, \ldots, k\}$, giving that $(\mathcal{M}(X), \widetilde{f}_{1,\infty})$ is topological mixing. 

Conversely, let $U$, $V$ be non-empty open subsets of $X$. Let $W_1 = \{\mu\in\mathcal{M}(X) : \mu(U)> 4/5\}$ and $W_2 = \{\mu\in \mathcal{M}(X) : \mu(V)>4/5\}$, then $W_1$ and $W_2$ are non-empty open subsets of $\mathcal{M}(X)$. If $\mu_n\to \mu$ such that $\mu_n\in\mathcal{M}(X)\setminus W_1$, then $\mu_n(U)\leq 4/5$ implying that $\mu(U)\leq 4/5$ and hence $\mu\in\mathcal{M}(X)\setminus W_1$. Thus, $W_1$ is open in $\mathcal{M}(X)$ and similarly $W_2$ is also open in $\mathcal{M}(X)$. Now, since the system $(\mathcal{M}(X), \widetilde{f}_{1,\infty})$ is topological mixing, therefore there exists $k\in\mathbb{N}$ such that $\widetilde{f}_1^n(W_1)\cap W_2\ne\emptyset$, for all $n\geq k$. This implies that there exists $\nu\in W_1$ with $\widetilde{f}_1^n(\nu)\in W_2$ and hence $\widetilde{f}_1^n(\nu)(V) = \nu(f_1^{-n}(V))>4/5$, for all $n\geq k$. Also, as $\nu(U)>4/5$, so $f_1^n(U)\cap V\ne\emptyset$, for all $n\geq k$. Thus, $(X, f_{1,\infty})$ is topological mixing.
\end{proof}
By above theorem and Theorem \ref{Q}, we have the following result.
\begin{cor}\label{c5.2} The non-autonomous system $(X, f_{1,\infty})$ has QSP if and only if $(\mathcal{M}(X)$, $\widetilde{f}_{1,\infty})$ has QSP.
\end{cor}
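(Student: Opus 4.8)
The plan is to obtain the corollary as a short chain of equivalences, stitching together the two results that the statement explicitly cites: the characterization of QSP as topological mixing (Theorem \ref{Q}) and the transfer of topological mixing between a system and its induced measure system (the immediately preceding theorem). Explicitly, I would argue
\begin{align*}
(X, f_{1,\infty}) \text{ has QSP} &\iff (X, f_{1,\infty}) \text{ is topologically mixing}\\
&\iff (\mathcal{M}(X), \widetilde{f}_{1,\infty}) \text{ is topologically mixing}\\
&\iff (\mathcal{M}(X), \widetilde{f}_{1,\infty}) \text{ has QSP}.
\end{align*}
The first equivalence is Theorem \ref{Q} applied to the base system, the middle equivalence is the preceding theorem, and the last equivalence is Theorem \ref{Q} applied a second time, now to the induced system on $\mathcal{M}(X)$.

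Since all the analytic content lives in the two cited theorems, no new estimates are required; the work of writing the proof reduces to justifying that Theorem \ref{Q} is actually applicable to $(\mathcal{M}(X), \widetilde{f}_{1,\infty})$. This is the only genuine point of care and hence the main obstacle: Theorem \ref{Q} is proved under the paper's standing assumptions that the phase space is a compact metric space and that the generating family is surjective, so before invoking it for the induced system I must check both for $\mathcal{M}(X)$. Compactness of $(\mathcal{M}(X), \mathcal{D})$ follows from compactness of $X$ together with the identification of the Prohorov topology with the weak*-topology recorded in the preliminaries.

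For surjectivity of each induced map $\widetilde{f}_n$, I would give the following brief argument. Each $\widetilde{f}_n$ is the pushforward $\mu \mapsto (f_n)_*\mu$ (since $\widetilde{f}_n(\mu)(A)=\mu(f_n^{-1}(A))$), which is continuous for the weak*-topology and affine in $\mu$; its image is therefore a compact, hence closed, convex subset of $\mathcal{M}(X)$. Because each $f_n$ is surjective, for every $y\in X$ there is $x\in X$ with $f_n(x)=y$, so $\widetilde{f}_n(\delta_x)=\delta_{f_n(x)}=\delta_y$, whence the image contains every Dirac measure and therefore, by convexity, all of $\mathcal{M}_\infty(X)$. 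As $\mathcal{M}_\infty(X)$ is dense in $\mathcal{M}(X)$ and the image is closed, the image is all of $\mathcal{M}(X)$; consequently each $\widetilde{f}_n$, and hence every composition $\widetilde{f}_1^n$, is surjective. With compactness and surjectivity verified, both applications of Theorem \ref{Q} are legitimate, the chain of equivalences closes, and the corollary follows.
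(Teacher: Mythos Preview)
Your proposal is correct and matches the paper's own approach exactly: the paper simply states that the corollary follows from the preceding theorem together with Theorem~\ref{Q}, which is precisely your chain of equivalences. Your additional verification that $(\mathcal{M}(X),\mathcal{D})$ is compact and that each $\widetilde{f}_n$ is surjective is a welcome elaboration that the paper leaves implicit.
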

\begin{cor}\label{c5.3} The non-autonomous system $(\mathcal{M}(X\times Y), \widetilde{f}_{1,\infty}\times \widetilde{g}_{1,\infty})$ has QSP if and only if $(\mathcal{M}(X), \widetilde{f}_{1,\infty})$ and $(\mathcal{M}(Y), \widetilde{g}_{1,\infty})$ has QSP.
\end{cor}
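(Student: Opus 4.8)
The plan is to obtain this corollary by chaining together results already in hand, avoiding any fresh estimate in the Prohorov metric. First I would fix notation by writing $h_{1,\infty} = f_{1,\infty}\times g_{1,\infty}$ for the product non-autonomous system on the compact metric space $X\times Y$, whose $n$-th map is $h_n = f_n\times g_n$ and whose composition satisfies $h_1^n = f_1^n\times g_1^n$. I would then identify the system appearing in the statement, $(\mathcal{M}(X\times Y), \widetilde{f}_{1,\infty}\times \widetilde{g}_{1,\infty})$, with the system induced on the probability-measure space by $h_{1,\infty}$, that is, $(\mathcal{M}(X\times Y), \widetilde{h}_{1,\infty})$, where $\widetilde{h}_1^n(\mu)(A) = \mu((h_1^n)^{-1}(A))$ for $\mu\in\mathcal{M}(X\times Y)$ and $A\in\mathcal{B}(X\times Y)$.

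With this identification the argument becomes a short equivalence chain. Applying Corollary \ref{c5.2} to the single system $(X\times Y, h_{1,\infty})$, the induced system $(\mathcal{M}(X\times Y), \widetilde{h}_{1,\infty})$ has QSP if and only if $(X\times Y, h_{1,\infty})$ has QSP. Next, by the second conclusion of Remark \ref{RR}, which extends the product theorem (Theorem \ref{KK}) to QSP in both directions, $(X\times Y, f_{1,\infty}\times g_{1,\infty})$ has QSP if and only if both $(X, f_{1,\infty})$ and $(Y, g_{1,\infty})$ have QSP. Finally, applying Corollary \ref{c5.2} once more to each factor separately, $(X, f_{1,\infty})$ has QSP if and only if $(\mathcal{M}(X), \widetilde{f}_{1,\infty})$ does, and likewise for $Y$. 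Composing these three equivalences yields exactly the asserted statement.

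I expect the main (and essentially the only) obstacle to lie in justifying the identification made in the first step: one must verify that forming the product family $\{f_n\times g_n\}$ and then inducing on $\mathcal{M}(X\times Y)$ is precisely the operation denoted $\widetilde{f}_{1,\infty}\times \widetilde{g}_{1,\infty}$, so that the hypotheses of Corollary \ref{c5.2} genuinely apply to $h_{1,\infty}$ on the compact metric space $X\times Y$. Once this bookkeeping is settled, the remaining steps are immediate invocations of the cited results. As an alternative that sidesteps any discomfort with the direct product interpretation, one may route the whole argument through topological mixing: by Theorem \ref{Q} each of the equivalences above can be read as a statement about mixing, and the mixing characterisation together with the theorem immediately preceding this corollary gives the same conclusion without invoking QSP on the measure spaces directly.
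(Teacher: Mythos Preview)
Your proposal is correct and matches the paper's intent: the corollary is stated without proof and is meant to follow immediately by chaining Corollary~\ref{c5.2} (applied to $X\times Y$ and to each factor) with the QSP version of the product result recorded in Remark~\ref{RR}. Your explicit identification of $\widetilde{f}_{1,\infty}\times \widetilde{g}_{1,\infty}$ on $\mathcal{M}(X\times Y)$ with $\widetilde{h}_{1,\infty}$ for $h_{1,\infty}=f_{1,\infty}\times g_{1,\infty}$ is exactly the bookkeeping the paper's notation tacitly assumes (compare the analogous Corollary~\ref{5.2} for $\mathcal{K}$), so nothing further is needed.
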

Now, we give an example supporting results of this section.
\begin{exm} Consider a $3$-periodic non-autonomous system $(X, f_{1,\infty})$, where $f_1 = \sigma$, $f_2 = \sigma^{-2}$, $f_3 = \sigma^2$ and $X = \Sigma_2$, that is, 
$f_{1,\infty} = \{\sigma, \sigma^{-2}, \sigma^2, \sigma, \sigma^{-2}, \sigma^2, \ldots\}$, where $\sigma$ is the shift map as defined in Example \ref{2}. Since, $f_3\circ f_2\circ f_1 = \sigma$ and $\sigma$ has QSP, therefore the induced autonomous system $(X, f_3\circ f_2\circ f_1)$ has QSP and hence by Corollary \ref{c4.1}, $(\Sigma_2, f_{1,\infty})$ has QSP. Thus, by Remark \ref{r5.1} and Corollary \ref{c5.2}, both the systems  $(\mathcal{K}(\Sigma_2), \overline{f}_{1,\infty})$ and $(\mathcal{M}(\Sigma_2), \widetilde{f}_{1,\infty})$ have QSP. Also, if $g_1: I \to I$ is a continuous map on a closed unit interval $I$, given by $g_1 = 4x(1-x)$ and $g_2: I\to I$ is the identity map and  $(I, g_{1,\infty})$ is the corresponding  $2$-periodic non-autonomous system, then $(I, g_{1,\infty})$ has QSP. Therefore, by Remark \ref{RR}, we have $(\Sigma_2\times I, f_{1,\infty}\times g_{1,\infty})$ has QSP and respectively by Remark \ref{r5.1} and Corollary \ref{c5.3}, the systems $(\mathcal{K}(\Sigma_2\times I), \overline{f}_{1,\infty}\times \overline{g}_{1,\infty})$ and $(\mathcal{M}(\Sigma_2\times I), \widetilde{f}_{1,\infty}\times \widetilde{g}_{1,\infty})$ have QSP.
\end{exm}

\end{document}